\newcommand\R{{\mathbb{R}}}
\newcommand\C{{\mathbb{C}}}
\newcommand\Z{{\mathbb{Z}}}
\newcommand\N{{\mathbb{N}}}
\newcommand\dist{\operatorname{dist}}
\DeclareMathOperator*{\esssup}{ess\,sup}
\newcommand{\wt}{\widetilde}
\newcommand{\Norm}[1]{ \left\|  #1 \right\| }
\newcommand\CA{{\mathcal A}}
\newcommand\CM{{\mathcal M}}
\newcommand\CS{{\mathcal S}}
\theoremstyle{plain}
  \newtheorem{theorem}{Theorem}[section]
  \newtheorem{lemma}[theorem]{Lemma}
  \newtheorem{corollary}[theorem]{Corollary}
\theoremstyle{definition}
  \newtheorem{definition}[theorem]{Definition}
  \newtheorem{remark}[theorem]{Remark}
\date{}
\title{Sparse bounds for pseudodifferential operators}
\author{David Beltran}
\address{David Beltran: Basque Center for Applied Mathematics (BCAM), Alameda de Mazarredo 14, 48009 Bilbao, Spain}
\email{dbeltran@bcamath.org}
\author{Laura Cladek}
\address{Laura Cladek: Department of Mathematics, University of California, Los Angeles, CA 90095}
\email{cladekl@math.ucla.edu}
\subjclass[2010]{Primary: 	35S05, Secondary: 42B25}
\keywords{Pseudodifferential operators, sparse domination, weighted theory}
\begin{document}

\maketitle

\begin{abstract}

We prove sparse bounds for pseudodifferential operators associated to Hörmander symbol classes. Our sparse bounds are sharp up to the endpoint and rely on a single scale analysis. As a consequence, we deduce a range of weighted estimates for pseudodifferential operators. The results naturally apply to the context of oscillatory Fourier multipliers, with applications to dispersive equations and oscillatory convolution kernels.



\end{abstract}

\section{Introduction and statement of results}

Given a smooth function $a \in C^\infty(\R^n \times \R^n)$, define the associated pseudodifferential operator $T_a$ by
\begin{align*}\label{opdef}
T_af(x)=\int_{\mathbb{R}^n}e^{ix\cdot\xi}a(x,\xi)\widehat{f}(\xi)d\xi,
\end{align*}
where $f \in \mathcal{S}$ and $\widehat{f}$ denotes the Fourier transform of $f$. The smooth function $a$ is assumed to belong to the symbol classes $S^m_{\rho,\delta}$, introduced by Hörmander in \cite{Hormander} and further studied by many authors, see for instance \cite{CV1972, Fe73, bigStein, CSpdos}. Recall that $S^m_{\rho,\delta}$ consists of all $a \in C^\infty(\R^n \times \R^n)$ satisfying the differential inequalities
\begin{equation}\label{symbol}
|\partial_x^\nu \partial_\xi^\sigma a(x,\xi)| \lesssim (1+|\xi|)^{m-\rho|\sigma|+\delta|\nu|}
\end{equation}
for all multi-indices $\nu,\sigma \in \N^n$, where $m \in \R$ and $0\leq \delta, \rho \leq 1$. \footnote{We use the notation $A \lesssim B$ to denote that there is a constant $C$ such that $A \leq CB$ and the notation $A \lesssim_{\epsilon} B$  to specify the dependence of the implicit constant in a certain parameter $\epsilon$. We omit the constant factors of $\pi$ coming from our normalisation of the Fourier transform.}
\newline
\indent
The main goal of this article is to prove sparse domination for such pseudodifferential operators. Obtaining sparse bounds for classical operators in harmonic analysis has been an active area of research in recent years, starting with fundamental work of Lerner \cite{LeCZ, LeA2} in the context of Calderón--Zygmund operators. The original Banach space norm domination of Lerner was shortly further refined to a pointwise sparse control, see \cite{CAR2014, LN, Lac2015, LeNew}. More recently, a new perspective on the subject has been taken, which consists in a domination in the context of bilinear forms. This was introduced by Bernicot, Frey and Petermichl \cite{BFP} and was further developed in the work of Culiuc, Di Plinio and Ou \cite{CDO1,CDO2}. This approach has proved to be highly successful, as it applies to operators that fall well beyond the classical Calderón--Zygmund theory. Among many examples, we may find Bochner--Riesz multipliers \cite{BBL, LMR}, rough singular integrals \cite{CACDO}, the bilinear Hilbert transform \cite{CDO1}, the variational Carleson operator \cite{DDG}, oscillatory singular integrals \cite{LS,KLSparse}, spherical maximal functions \cite{LaceySpherical}, a specific singular Radon transform \cite{OberlinSparse} or the recent work by Ou and the second author \cite{CO} for Hilbert transforms along curves; we also refer to  \cite{KLdiscrete, CKL} for sparse domination in a discrete setting. 
\newline
\indent
While the technique of sparse domination has been successfully applied to study weighted bounds for a wide variety of operators in harmonic analysis, as the ones listed above, it has not previously been used to yield any weighted bounds for pseudodifferential operators. However, weighted $A_p$ estimates for these objects have been studied before. Chanillo and Torchinksy proved in \cite{CT} that if $a \in S^{-n(1-\rho)/2}_{\rho,\delta}$, with $0<\delta<\rho<1$, then $T_a$ is bounded on $L^p(w)$ for $w \in A_{p/2}$ and $2 \leq p < \infty$. Later, Michalowski, Rule and Staubach \cite{MRS10} extended this result to $\delta=\rho$, and also proved that for the smaller classes $S^{-n(1-\rho)}_{\rho,\delta}$, with $0<\rho \leq 1$, $0 \leq \delta<1$, there is $L^p(w)$ boundedness for $w \in A_p$ and $1<p<\infty$, see \cite{MRScan}. More recently, the first author \cite{Bel2016} established weighted boundedness for the symbol classes $S^m_{\rho,\delta}$ with $-n(1-\rho)/2<m<0$ and $0 \leq \delta \leq \rho < 1$; in this case $T_a$ is bounded on $L^p(w)$ if $w \in A_{p/2} \cap RH_{(2t'/p)'}$ and $2 \leq p < 2t'$, where $t=(\rho-1)n/2m$. \footnote{Given $1 \leq r \leq \infty$, $r'$ denotes its conjugate Hölder exponent, that is $1/r+1+r'=1$.}
\newline
\indent
The weighted bounds obtained as a corollary of our sparse domination theorem yield new weighted estimates for pseudodifferential operators and also recover all the bounds mentioned in the above paragraph, except for some endpoint cases. A more detailed discussion is postponed to Section \ref{sec:weighted}. The simplicity of the methods through which we are able to obtain weighted corollaries that improve existing results in the literature is a further testament to the power and versatility of the method of sparse domination. 
\newline
\indent
We first give the definition of sparse collections of cubes and sparse bilinear forms, and then follow with the statement of our main theorem. It is instructive to interpret our results recalling the embeddings of the symbol classes, that is $S_{\rho_1,\delta_1}^{m_1} \subseteq S_{\rho_2,\delta_2}^{m_2}$ if $m_1 \leq m_2,  \rho_1 \geq \rho_2$ and $\delta_1 \leq \delta_2$.
\begin{definition}
A collection of dyadic cubes $\mathcal{S}$ in $\mathbb{R}^n$ is $\eta$-sparse for $0<\eta<1$ if there exist sets $\{E_{{Q}}: Q\in\mathcal{S}\}$ that are pairwise disjoint, $E_Q\subset Q$, and satisfy $|E_Q|>\eta|Q|$ for all $Q\in\mathcal{S}$. For any dyadic cube $Q$, for $1\le p<\infty$ define $\left<f\right>_{p,Q}^p:=|Q|^{-1}\int_Q|f|^p\,dx$. Given $1\le r, s<\infty$, the $(r, s)$ sparse form $\Lambda_{\mathcal{S}, r, s}$ is defined as
\begin{align*}
\Lambda_{\mathcal{S}, r, s}(f, g):=\sum_{Q\in\mathcal{S}}|Q|\left<f\right>_{r,Q}\left<g\right>_{s,Q},
\end{align*}
and the $r$-sparse operator $\CA_{r,\CS}$ is defined as
$$
\mathcal{A}_{r,\mathcal{S}}f(x):=\sum_{Q \in \mathcal{S}} \left< f \right>_{r,Q} \chi_Q(x).
$$
\end{definition}

\begin{theorem}\label{mainps}
Let $a \in S_{\rho, \delta}^m$ for some $m<0$ and $0<\delta\le\rho<1$. Then for any compactly supported bounded functions $f, g$ on $\mathbb{R}^n$, there exist sparse collections $\mathcal{S}$ and $\widetilde{\mathcal{S}}$ of dyadic cubes such that
\begin{align*}
|\left<T_af, g\right>|\le C(m,\rho, r, s)\Lambda_{\mathcal{S}, r, s'}(f, g)
\end{align*}
and
\begin{align*}
|\left<T_af, g\right>|\le C(m,\rho, r, s)\Lambda_{\widetilde{\mathcal{S}}, s', r}(f, g)
\end{align*}
for all pairs $(r, s')$ and $(s', r)$ such that 
\begin{align*}
m<-n(1-\rho)(1/r-1/2), \qquad 1\le r\le s \le 2
\end{align*}
or
\begin{align*}
m<-n(1-\rho)(1/r-1/s), \qquad 1\le r\le 2 \le s \le r'.
\end{align*}
\end{theorem}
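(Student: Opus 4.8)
The plan is to reduce the bilinear form to a single scale via the now‑standard sparse domination machinery, and then to read off the single‑scale estimate from the $L^{2}$ theory of $S^{m}_{\rho,\delta}$, exploiting that the Littlewood--Paley pieces of $T_{a}$ are spatially localised at scale $2^{-\rho j}$ while oscillating at the finer scale $2^{-j}$. First I would decompose $T_{a}=\sum_{j\ge 0}T_{j}$, where $T_{j}=T_{a_{j}}$ and $a_{j}(x,\xi)=a(x,\xi)\psi_{j}(\xi)$ for a Littlewood--Paley partition $1=\sum_{j}\psi_{j}$ with $\psi_{j}$ supported in $|\xi|\sim 2^{j}$. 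Integrating by parts in $\xi$ in the kernel $K_{j}(x,y)=\int_{\R^{n}}e^{i(x-y)\cdot\xi}a_{j}(x,\xi)\,d\xi$ and using \eqref{symbol} gives $|K_{j}(x,y)|\lesssim_{N}2^{j(n+m)}(1+2^{\rho j}|x-y|)^{-N}$ for every $N$, so $K_{j}$ is concentrated on $|x-y|\lesssim 2^{-\rho j}$; the rapidly decaying tail lets one split off pieces supported on $|x-y|\sim 2^{\ell}2^{-\rho j}$ with a gain $2^{-\ell N}$, whose contributions sum geometrically in $\ell$, so it suffices to treat the piece genuinely localised at scale $2^{-\rho j}$. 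By the bilinear‑form sparse domination machinery (\cite{BFP,CDO1} and later refinements), the theorem reduces to a single‑scale testing inequality: for each $j$ and each cube $Q$ with $\ell(Q)\sim 2^{-\rho j}$,
\[
|\langle T_{j}(f\mathbf 1_{CQ}),\,g\mathbf 1_{CQ}\rangle|\le c_{j}\,|Q|\,\langle f\rangle_{r,CQ}\langle g\rangle_{s',CQ},\qquad \sum_{j\ge 0}c_{j}<\infty,
\]
for the relevant pairs $(r,s')$, with an absolute dilation constant $C$.

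By Hölder's inequality the left side is at most $\|T_{j}\|_{L^{r}\to L^{s}}\,|Q|^{1/r+1/s'}\langle f\rangle_{r,CQ}\langle g\rangle_{s',CQ}$, so the testing inequality holds with $c_{j}=\|T_{j}\|_{L^{r}\to L^{s}}\,2^{-\rho jn(1/r-1/s)}$, and everything comes down to the four single‑scale operator bounds
\begin{align*}
\|T_{j}\|_{L^{2}\to L^{2}}&\lesssim 2^{jm}, & \|T_{j}\|_{L^{1}\to L^{\infty}}&\lesssim 2^{j(n+m)},\\
\|T_{j}\|_{L^{1}\to L^{1}}&\lesssim 2^{j(m+n(1-\rho)/2)}, & \|T_{j}\|_{L^{1}\to L^{2}}&\lesssim 2^{j(m+n/2)}.
\end{align*}
The second is trivial since $|K_{j}|\lesssim 2^{j(n+m)}$. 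The first is the Calderón--Vaillancourt theorem after the rescaling $(x,\xi)\mapsto(2^{-\rho j}z,2^{\rho j}\eta)$, which sends $a_{j}$ to a symbol with uniformly bounded $S^{0}_{0,0}$ seminorms precisely because $\delta\le\rho$. For the third, Plancherel in $y$ gives $\|K_{j}(x,\cdot)\|_{L^{2}_{y}}=\|a_{j}(x,\cdot)\|_{L^{2}_{\xi}}\lesssim 2^{j(m+n/2)}$, and Cauchy--Schwarz over the localisation region of volume $2^{-\rho jn}$, together with the same bound for $T_{a}^{*}$ (again a pseudodifferential operator in $S^{m}_{\rho,\delta}$, since $\delta\le\rho<1$), yields the claim. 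The fourth bound is the heart of the matter: writing $\|T_{j}\|_{L^{1}\to L^{2}}=\sup_{y}\|K_{j}(\cdot,y)\|_{L^{2}_{x}}$ and applying the same $\delta\le\rho$ rescaling, it reduces to an $L^{2}$ bound on the kernel of a pseudodifferential operator with uniformly controlled $S^{0}_{0,0}$ symbol whose frequencies lie in an annulus of radius $2^{(1-\rho)j}$; Plancherel in the output variable then produces exactly the gain $2^{jn/2}$ over what a naive size estimate on $K_{j}$ would give.

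With these four bounds in hand, interpolation (Riesz--Thorin, applied twice) delivers the full range. Interpolating in the triangle with vertices $(1,1),(1,\tfrac12),(\tfrac12,\tfrac12)$ in the $(1/r,1/s)$‑plane yields, for $1\le r\le s\le 2$,
\[
c_{j}=\|T_{j}\|_{L^{r}\to L^{s}}\,2^{-\rho jn(1/r-1/s)}\lesssim 2^{\,j\left(m+n(1-\rho)(1/r-1/2)\right)},
\]
which is summable exactly when $m<-n(1-\rho)(1/r-1/2)$, giving the first family; interpolating in the triangle with vertices $(1,0),(1,\tfrac12),(\tfrac12,\tfrac12)$ yields the exponent $m+n(1-\rho)(1/r-1/s)$, summable exactly when $m<-n(1-\rho)(1/r-1/s)$, giving the second family. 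Running the sparse iteration produces a collection $\CS$ and the bound by $\Lambda_{\CS,r,s'}$; applying the entire argument to $T_{a}^{*}$ and interchanging the roles of $f$ and $g$ produces $\wt{\CS}$ and the bound by $\Lambda_{\wt{\CS},s',r}$.

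The step I expect to be the main obstacle is the single‑scale $L^{1}\to L^{2}$ estimate for $T_{j}$: the crude Schur/size estimates available from the pointwise bound on $K_{j}$ lose a full power $2^{jn(1-\rho)/2}$ and would force a factor‑of‑two loss in the threshold for $m$; recovering the sharp bound is exactly where the oscillatory structure of the classes $S^{m}_{\rho,\delta}$ enters, via the $\delta\le\rho$ rescaling to $S^{0}_{0,0}$ together with Plancherel, and it is this gain—propagated through interpolation—that makes the exponents above sharp up to the endpoint. Two further technical points, routine but needing care, are the mismatch between the operator scales $2^{-\rho j}$ and the dyadic scales $2^{-k}$ (each dyadic scale carrying $\lesssim 1/\rho$ many pieces $T_{j}$), and the borderline $\delta=\rho$ in the adjoint symbol calculus used for the bound on $T_{a}^{*}$ and for the second sparse form.
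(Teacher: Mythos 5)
Your four single--scale bounds $L^2\to L^2$, $L^1\to L^\infty$, $L^1\to L^1$, $L^1\to L^2$ and the two Riesz--Thorin interpolations are the same quantitative inputs the paper uses (Calder\'on--Vaillancourt, Fefferman's Plancherel/Cauchy--Schwarz argument, and Bernstein in place of your direct $L^1\to L^2$ computation), and they yield the same exponent $c_j$. The part of the argument that does not go through as written is the reduction to ``a single--scale testing inequality for cubes with $\ell(Q)\sim 2^{-\rho j}$,'' justified by the claim that the kernel tails ``sum geometrically in $\ell$, so it suffices to treat the piece genuinely localised at scale $2^{-\rho j}$.''

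There are two problems. First, the refined $L^2\to L^2$ and $L^\infty\to L^\infty$ bounds ($2^{jm}$ and $2^{j(m+n(1-\rho)/2)}$) are theorems about the full operator $T_j$; they are not inherited by the spatial truncation $T_j^{\,0}$ to $|x-y|\lesssim 2^{-\rho j}$. One only controls $T_j^{\,0}$ by writing $T_j^{\,0}=T_j-\sum_{\ell\ge1}T_j^{\,\ell}$ and estimating the subtracted tail, so the quality of the tail estimate \emph{does} leak into the main piece. Second, and more seriously, the tail estimate you have available is a Schur--type bound: $\|T_j^{\,\ell}\|_{L^r\to L^s}\lesssim_N 2^{j(n+m)}2^{-\ell N}(2^{\ell-\rho j})^{n(1-1/r+1/s)}$. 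Plugged into the sparse form on cubes of sidelength $2^{\ell-\rho j}$, each $(j,\ell)$ contributes $\lesssim 2^{j(m+n(1-\rho))}2^{\ell(n-N)}$. Summing $\ell\ge 1$ indeed converges, but the resulting factor $2^{j(m+n(1-\rho))}$ does \emph{not} sum over $j$ for the whole range $-n(1-\rho)<m<0$ in the theorem; the $N$ cannot be allowed to grow with $j$ without losing uniform constants. The integration--by--parts gain $2^{-\ell N}$ only beats the polynomial loss $2^{jn(1-\rho)}$ if $\ell$ grows linearly in $j$. This is exactly why the paper truncates at $l>j\epsilon$ rather than $l>0$: for $l>j\epsilon$ one can take $N=N(\epsilon)$ so large that $2^{-lN(\epsilon)}$ decays super--polynomially in both $j$ and $l$, at the price that the main block now lives on cubes of sidelength $\sim 2^{j(\epsilon-\rho)}$, and it is precisely this $\epsilon$ that produces the extra factor $2^{j\epsilon n(1/r-1/s)}$ in the sparse constant, forcing the \emph{open} conditions on $m$ in the statement. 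Your scheme, as written, either fails on the tail or would (misleadingly) appear to reach the closed endpoint, which Section~3 of the paper shows is impossible. A minor additional point: your derivation of $\|T_j\|_{L^1\to L^1}$ by ``Cauchy--Schwarz over the localisation region'' silently drops the kernel tail $\int_{|x-y|>2^{-\rho j}}|K_j|$, which from the pointwise bound alone is of size $2^{j(m+n(1-\rho))}$ and dominates; the paper's version of this estimate controls the tail by applying Plancherel to $\xi$--derivatives of the symbol, not just to the symbol itself.
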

The sparse bounds in Theorem \ref{mainps} are best understood in the plane $(1/r,1/s')$. Given a symbol $a \in S^m_{\rho,\delta}$, where $m<0$ and $0 \leq \delta \leq \rho < 1$, the pairs $(r,s')$ for which a sparse bound holds satisfy that $(1/r,1/s')$ lie inside an \textit{open} trapezoid $\mathcal{T}_{m,\rho,n}$ with vertexes depending on the parameters $m, \rho$ and the dimension $n$ (see Figure \ref{sparse region}). Such vertexes are given by
$$
v_1=(1,0), \quad v_2=\Big(1, \frac{-m}{n(1-\rho)}\Big), \quad v_3=\Big(\frac{-m}{n(1-\rho)}, 1\Big), \quad v_4=(0,1),
$$
if $-n(1-\rho) \leq m\leq -n(1-\rho)/2$, and

\begin{align*}
&v_1=\Big(\frac{1}{2} + \frac{-m}{n(1-\rho)}, \frac{1}{2} + \frac{m}{n(1-\rho)}\Big),  && v_2 =\Big(\frac{1}{2} + \frac{-m}{n(1-\rho)}, 1/2\Big),
\\
&v_3=\Big(\frac{1}{2}, \frac{1}{2} + \frac{-m}{n(1-\rho)}\Big),  && v_4=\Big(\frac{1}{2} + \frac{m}{n(1-\rho)}, \frac{1}{2} + \frac{-m}{n(1-\rho)}\Big),
\end{align*}
if $ -n(1-\rho)/2<m<0$.

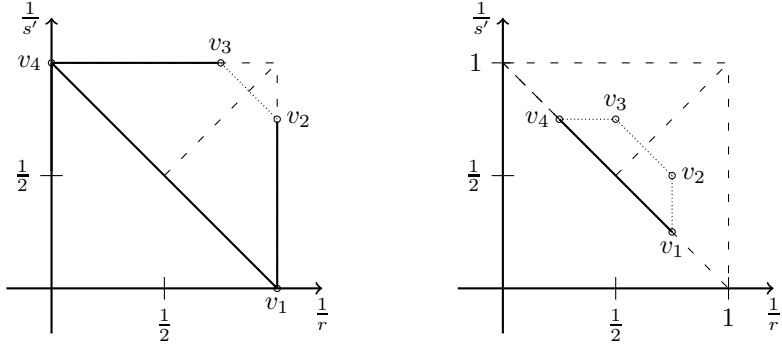
\begin{figure}[H]
\begin{tikzpicture}[scale=2] 

\begin{scope}[scale=1.5]
\draw[thick,->] (-.2,0) -- (1.2,0) node[below] {$ \frac 1 r$};
\draw[thick,->] (0,-.2) -- (0,1.2) node[left] {$ \frac{1}{s'}$};

\draw[loosely dashed] (0,1) -- (1.,1.)  -- (1.,0); 
\draw[thick] (0,1) -- (0.5,0.5)  -- (1.,0);

\draw (.5,.05) -- (.5,-.05) node[below] {$ \tfrac 12$};
\draw (.05,.5) -- (-.05,.5) node[left] {$ \tfrac 12$};

\draw (1,0) circle (.04em) node[below] {$ v_1$};  
\draw (0,1) circle (.04em) node[left] {$ v_4$}; 

\draw[thick] (0,1) -- (0,0.52);

\draw[loosely dashed] (0,1) -- (0.5,0.5)  -- (1.,0); 

\draw[loosely dashed]  (0.5,0.5)  -- (1.,1); 

\draw[thick] (0,1) -- (0.74,1);
\draw[thick] (1,0) -- (1,.74);

\draw[densely dotted] (1,0.75) -- (.75,1);

\draw (1.,.75) circle (.04em) node[right] { $ v_2$};
\draw (.75,1) circle (.04em) node[above] { $ v_3$};

\begin{scope}[xshift=2cm] 

\draw[thick,->] (-.2,0) -- (1.2,0) node[below] {$ \frac 1 r$};
\draw[thick,->] (0,-.2) -- (0,1.2) node[left] {$ \frac{1}{s'}$};

\draw[loosely dashed] (0,1) -- (1.,1.)  -- (1.,0); 
\draw[loosely dashed] (0,1) -- (1/4,3/4);
\draw[loosely dashed] (3/4,1/4) -- (0,1);
\draw[thick] (1/4,3/4) -- (0.5,0.5)  -- (3/4.,1/4);

\draw (.5,.05) -- (.5,-.05) node[below] {$ \tfrac 12$};
\draw (.05,.5) -- (-.05,.5) node[left] {$ \tfrac 12$};

\draw (1,.05) -- (1,-.05) node[below] {$ 1$};
\draw (.05,1) -- (-.05,1) node[left] {$ 1$};

\draw (3/4,1/4) circle (.04em) node[below] {$ v_1$};  
\draw (1/4,3/4) circle (.04em) node[left] {$ v_4$}; 

\draw[loosely dashed] (0,1) -- (0.5,0.5)  -- (1.,0); 

\draw[loosely dashed]  (0.5,0.5)  -- (1.,1);

\draw[densely dotted] (0.75,1/4) -- (.75,1/2);
\draw[densely dotted] (1/4,3/4) -- (1/2,3/4);
\draw[densely dotted] (0.75,1/2) -- (.5,3/4);

\draw (3/4,.5) circle (.04em) node[right] { $ v_2$};
\draw (.5,3/4) circle (.04em) node[above] { $ v_3$};

\end{scope}
\end{scope}

\end{tikzpicture}

\caption{The region for sparse bounds. These bounds are sharp up to the boundary of the region, and also include the thick boundary segments. The trapezoid $\mathcal{T}_{m,\rho,n}$ on the left corresponds to the case $-n(1-\rho)<m \leq -n(1-\rho)/2$, and the one on the right to the case $-n(1-\rho)/2 < m < 0$.}
\label{sparse region}

\end{figure}

The methods to prove the above domination theorem are inspired by the recent works of Lacey and Spencer \cite{LS} and Lacey, Mena and Reguera \cite{LMR}. It consists in obtaining geometrically decaying sparse bounds for a single scale version of the operators under study. Its short proof will be given in Section \ref{sec:proof}.
\newline
\indent
The above sparse bounds are sharp up to the endpoints, as it will be shown in Section \ref{sec:endpoints}. Concerning endpoint results, we remark that the symbols in the class $S^0_{1,\delta}$ with $\delta<1$ \footnote{The case $\delta=1$ is naturally excluded from all statements as it is well know that there are symbols in that class that fail to be bounded on $L^2$; see, for instance \cite{bigStein}.} are Calderón--Zygmund operators, and thus a pointwise sparse bound by $L^1$ averages (in the context of the upcoming Theorem \ref{pointwise}) follows from \cite{CAR2014,LN}. Also, for the specific classes $S^{-n(1-\rho)}_{\rho,\delta}$, with $0<\rho \leq 1$, $0 \leq \delta<1$, it is possible to obtain the following endpoint sparse bound in the stronger pointwise control context.

\begin{theorem}\label{pointwise}
Let $a \in S^{-n(1-\rho)}_{\rho,\delta}$, $0<\rho\leq 1$, $0\leq \delta<1$. Then for every $f \in C_0^\infty(\R^n)$ and any $r>1$, there exists a sparse family $\CS$ such that for $a.e.$ $x\in \R^n$,
$$
|T_af(x)| \lesssim \mathcal{A}_{r,\mathcal{S}}f(x).
$$
\end{theorem}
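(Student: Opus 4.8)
The plan is to deduce Theorem~\ref{pointwise} from a pointwise sparse domination criterion of Lerner--Ombrosi type. The essential feature, and the source of the difficulty, is that for $\rho<1$ the operator $T_a$ is \emph{not} a Calderón--Zygmund operator: its kernel fails the usual Hörmander condition, and only by a logarithmic amount, which traces back to the ``intermediate'' frequency scales $\ell(Q)^{-1}\lesssim|\xi|\lesssim\ell(Q)^{-1/\rho}$ associated with a cube $Q$ of side $\ell(Q)$. Accordingly, the ``local part'' of $T_a$ must be removed on an \emph{anisotropic} scale adapted to the type $\rho$. Since $S^0_{1,\delta}$ with $\delta<1$ consists of Calderón--Zygmund operators, the case $\rho=1$ already follows from \cite{CAR2014,LN}, so I assume $0<\rho<1$ from now on.

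First I would assemble the ingredients. Since $S^{-n(1-\rho)}_{\rho,\delta}\subseteq S^{-n(1-\rho)|1/r-1/2|}_{\rho,\delta}$, the operator $T_a$ is bounded on $L^r(\R^n)$ for every $1<r<\infty$ by the classical $L^r$ theory for Hörmander classes \cite{Fe73}. Decompose $T_a=\sum_{j\geq 0}T_{a_j}$ with $a_j=a\,\psi_j$ supported in $|\xi|\sim 2^j$ (and $|\xi|\lesssim 1$ for $j=0$) and kernel $K_j$. Repeated integration by parts in $\xi$ gives
\begin{equation*}
|\partial_x^\alpha\partial_y^\beta K_j(x,y)|\lesssim_N 2^{j(n\rho+|\alpha|+|\beta|)}\bigl(1+2^{j\rho}|x-y|\bigr)^{-N}
\end{equation*}
for all multi-indices $\alpha,\beta$ and all $N$ (the hypothesis $\delta<1$ being used to bound the $x$-derivatives of the symbol). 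Hence $\int|K_j(x,y)|\,dx+\int|K_j(x,y)|\,dy\lesssim 1$, so $\|T_{a_j}\|_{L^1\to L^1}$ and $\|T_{a_j}\|_{L^\infty\to L^\infty}$ are $\lesssim 1$, while $\|T_{a_j}\|_{L^2\to L^2}\lesssim 2^{-c_2 j}$ for some $c_2=c_2(n,\rho,\delta)>0$; here is where $\rho<1$ is essential, since the $L^2$ operator norm of a frequency piece of a symbol of order $-n(1-\rho)$ and type $\rho$ decays geometrically. Interpolation then yields the crucial geometric gain $\|T_{a_j}\|_{L^r\to L^r}\lesssim 2^{-c_r j}$ with $c_r=c_r(n,\rho,\delta,r)>0$ for every $1<r<\infty$.

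Now fix $r>1$, let $Q^{*}$ denote the cube concentric with $Q$ of side $\ell(Q^{*})\sim\max(\ell(Q),\ell(Q)^{\rho})$, and introduce the anisotropic grand maximal truncation
\begin{equation*}
\mathcal{M}^{\rho}_{T_a}f(x):=\sup_{Q\ni x}\ \esssup_{x'\in Q}\ \bigl|T_a\bigl(f\chi_{\R^n\setminus Q^{*}}\bigr)(x')\bigr|.
\end{equation*}
The technical heart is the pointwise bound $\mathcal{M}^{\rho}_{T_a}f\lesssim M(T_af)+M_r f$, where $M$ is the Hardy--Littlewood maximal operator and $M_r f=(M(|f|^r))^{1/r}$; this yields $\mathcal{M}^{\rho}_{T_a}:L^r\to L^{r,\infty}$. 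To prove it, fix $Q\ni x$ and split $T_a(f\chi_{(Q^{*})^c})=\sum_j T_{a_j}(f\chi_{(Q^{*})^c})$ at the threshold $j_Q$ given by $2^{-j_Q\rho}\sim\ell(Q^{*})$, i.e. $2^{j_Q}\sim\ell(Q)^{-1}$. For $j\geq j_Q$ the kernel $K_j$ is concentrated within distance $2^{-j\rho}\leq\ell(Q^{*})\leq\dist(x',\supp f\chi_{(Q^{*})^c})$ of $x'$, so these terms lie in the rapidly decaying regime and sum to $O(M_r f(x))$. For $j<j_Q$ write $T_{a_j}(f\chi_{(Q^{*})^c})=T_{a_j}f-T_{a_j}(f\chi_{Q^{*}})$: the size bound gives $\sum_{j<j_Q}|T_{a_j}(f\chi_{Q^{*}})(x')|\lesssim\bigl(\sum_{j<j_Q}2^{jn\rho}\bigr)\ell(Q^{*})^n M_r f(x)\lesssim M_r f(x)$, while $\sum_{j<j_Q}T_{a_j}f=T_{a^{<j_Q}}f$ is band-limited to $|\xi|\lesssim\ell(Q)^{-1}$, so that $|\nabla T_{a^{<j_Q}}f|\lesssim\ell(Q)^{-1}M_r f$ controls its oscillation over $Q$,
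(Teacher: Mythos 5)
Your strategy follows the paper's broad outline --- reduce to showing that a grand maximal truncation of $T_a$ is of weak type $(r,r)$ and then invoke a Lerner-type abstract pointwise sparse domination criterion --- but it departs at the crucial technical step. The paper removes the local part on the usual $3Q$ and then quotes, from Chanillo--Torchinsky \cite{CT} and Michalowski--Rule--Staubach \cite{MRScan}, the nontrivial oscillation estimate
$$
|T_a(f\chi_{\R^n\setminus 3Q})(z)-T_a(f\chi_{\R^n\setminus 3Q})(x')|\lesssim M_p f(x),\qquad z,x'\in Q,\ |Q|\le 1,\ p>1.
$$
This estimate encodes exactly the cancellation needed in the intermediate frequency range $\ell(Q)^{-1}\lesssim|\xi|\lesssim\ell(Q)^{-1/\rho}$ that you correctly identify as the obstruction, and it is \emph{not} a consequence of the elementary bounds $\|T_{a_j}\|_{L^1\to L^1},\|T_{a_j}\|_{L^\infty\to L^\infty}\lesssim 1$, $\|T_{a_j}\|_{L^2\to L^2}\lesssim 2^{-jn(1-\rho)}$ you work with. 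You instead enlarge the removed region anisotropically to $Q^*$ with $\ell(Q^*)\sim\ell(Q)^\rho$ for $\ell(Q)\le1$ and try to make those elementary bounds suffice; the write-up stops mid-sentence at the low-frequency global term $T_{a^{<j_Q}}f$, and its natural completion --- controlling the average of $|T_{a^{<j_Q}}f|$ over $Q$ by $\langle|T_af|\rangle_{1,Q}+\langle|T_{a^{\ge j_Q}}f|\rangle_{1,Q}$, and bounding the near part $\langle|T_{a^{\ge j_Q}}(f\chi_{2Q^*})|\rangle_{1,Q}$ by H\"older together with the $L^r\to L^r$ decay --- runs into a genuine obstruction.

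Concretely, interpolating your $L^1$ and $L^2$ bounds gives, for $1<r<2$, $\|T_{a_j}\|_{L^r\to L^r}\lesssim 2^{-2jn(1-\rho)/r'}$, hence $\|T_{a^{\ge j_Q}}\|_{L^r\to L^r}\lesssim\ell(Q)^{2n(1-\rho)/r'}$ with $2^{j_Q}\sim\ell(Q)^{-1}$. On the other hand,
$$
\langle|T_{a^{\ge j_Q}}(f\chi_{2Q^*})|\rangle_{1,Q}
\le\Big(\tfrac{|2Q^*|}{|Q|}\Big)^{1/r}\|T_{a^{\ge j_Q}}\|_{L^r\to L^r}\langle f\rangle_{r,2Q^*}
\lesssim\ell(Q)^{\,n(1-\rho)\left(2/r'-1/r\right)}M_rf(x),
$$
and as $\ell(Q)\le 1$ you need the exponent nonnegative, i.e. $2/r'\ge 1/r$, which forces $r\ge 3/2$. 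So the anisotropic grand maximal argument, as you have set it up, recovers the theorem only for $r\ge 3/2$ and misses the range $1<r<3/2$: the volume inflation $|Q^*|/|Q|=\ell(Q)^{n(\rho-1)}\to\infty$ as $\ell(Q)\to 0$ exactly eats up the gain from the anisotropic truncation, and no retuning of the exponent in $\ell(Q^*)$ or of the threshold $j_Q$ improves this. To reach all $r>1$ one needs a genuinely sharper cancellation estimate at the intermediate scales --- which is what the cited Chanillo--Torchinsky / Michalowski--Rule--Staubach estimate supplies and which your Littlewood--Paley interpolation does not. (A secondary point: the abstract pointwise-domination criterion you intend to apply would itself need to be re-derived for the scale-dependent, unboundedly expanding truncation $Q\mapsto Q^*$; this is plausible but nontrivial, and the proposal does not address it.)
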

Of course this implies domination for the symbol classes $S^{-n(1-\rho)}_{\rho,\delta}$ by a bilinear $(r,1)$ sparse form. The proof of this result is a straightforward consequence of the pointwise domination principle of Lerner \cite{LeNew} and a sharp maximal function estimate in \cite{MRScan}. A sketch of the proof is given in an appendix.
\newline
\indent
Finally, we note that our techniques naturally extend to the setting of oscillatory Fourier multipliers associated to differential inequalities of the type \eqref{symbol}. In particular, we are able to obtain sparse bounds for the model Fourier multipliers $a_{\rho,m}(\xi):=e^{i|\xi|^{1-\rho}}(1+|\xi|)^m \chi_{\{|\xi|^{1-\rho} \geq 1\}}$ for any value of $\rho \in \R$. Interestingly, when $\rho=-1$, the results may be interpreted in the context of the free linear Schrödinger equation. We refer to Section \ref{sec:multipliers} for further discussion.

\subsection*{Acknowledgments.}  The authors would like to thank Michael T. Lacey, Kangwei Li and Maria Reguera for helpful conversations and Virginia Naibo for pointing out an inaccuracy in a previous version of the manuscript. This work was initiated while the authors were in residence at the Mathematical Sciences Research Institute in Berkeley, California, during the Spring 2017 semester. The first author is supported by the ERC Grant 307617,  the ERCEA Advanced Grant 2014 669689 - HADE, the MINECO project MTM2014-53850-P, the Basque Government project IT-641-13, the Basque Government through the BERC 2014-2017 program, by Spanish Ministry of Economy and Competitiveness MINECO: BCAM Severo Ochoa excellence accreditation SEV-2013-0323, and an IMA small grant. The second author is supported by a National Science Foundation Postdoctoral Fellowship, NSF grant 1703715.

\section{Sparse bilinear form domination for Pseudodifferential operators}\label{sec:proof}
This section is devoted to prove Theorem \ref{mainps}, our sparse domination theorem for pseudodifferential operators. To this end, we first prove a domination by a geometrically decaying sequence of sparse bilinear forms, which will be later refined to a domination by a true sparse bilinear form. A key ingredient in the proof is the use of optimal $L^p$ improving estimates for frequency and spatially localised pieces of the operator.
\newline
\indent
Such Lebesgue space estimates are obtained by interpolating first the $L^2\to L^2$ and $L^1\to L^1$ estimates to yield optimal $L^s\to L^s$ bounds for $1 \le s \le 2$. Then, the use of Bernstein's inequality lead to the optimal $L^r\to L^s$ bounds for $1\le r\le s\le 2$, and a further interpolation with the $L^1\to L^\infty$ estimates leads to the optimal $L^r \to L^s$ bounds for $1 \leq r \leq 2 \leq s \leq r'$. 
\newline
\indent
This will allow us to obtain domination by a $(r,s')$ bilinear sparse form for such values of $r$ and $s$. The corresponding $(r,s')$ sparse bounds associated to the remaining possible cases for the values of $r$ and $s$ will follow by duality, as the Hörmander symbol classes of pseudodifferential operators are closed under adjoints.

\subsection*{Decomposition of the operators $T_a$.}
Let $a \in S^m_{\rho, \delta}$, where $m<0$ and $0\leq \delta \leq \rho < 1$, and consider its associated pseudodifferential operator $T_a$. Let $\psi$ be a smooth cutoff that is identically $1$ in the unit ball and supported in its double, and let $\wt{\psi}(\cdot):=\psi(\cdot)-\psi(2(\cdot))$. Decompose $T_a$ into frequency localised operators $T_a^j$ for integers $j\ge 0$, where $T_a^j$ is defined by
\begin{align*}
\begin{cases}
T_a^0f(x)=\int a(x, \xi)\psi(\xi)\widehat{f}(\xi)e^{ix\cdot\xi}d\xi,\\
T_a^{j}f(x)=\int a(x, \xi)\wt{\psi}(2^{-j}\xi)\widehat{f}(\xi)e^{ix\cdot\xi}d\xi, & j>0.
\end{cases}
\end{align*}
It is well known that the operator $T_a^j$ is naturally associated to the spatial scale $2^{-j\rho}$; this will become clear in the forthcoming Lemma \ref{ttlemma} via an integration by parts argument. 
Thus we further decompose each $T_a^j$ into spatially localised operators $T_a^{j, l}$ defined by
\begin{align*}
T_a^{j, l}f(x)=\int a(x, \xi)\wt{\psi}(2^{-j}\xi)\int f(y)\wt{\psi}(2^{-l+j\rho}(x-y))e^{i(x-y)\cdot\xi}\,dy\,d\xi
\end{align*}
for $l \in \Z$ and $j>0$, and in an analogous manner for $T^{0,l}_a$, so that
\begin{align*}
T_a=\sum_{j\ge 0}T_a^j=\sum_{l\in\mathbb{Z}}\sum_{j\ge 0}T_a^{j, l}.
\end{align*}

We further group the pieces $T_{a}^{j,l}$ according to their spatial scale. Fix some $\epsilon>0$ and write
\begin{align*}
T_a^j=\sum_{l\le j\epsilon}T_a^{j, l}+\sum_{l>j\epsilon}T_a^{j, l}.
\end{align*}
Our $L^p$ improving estimates for the pieces $\sum_{l \leq j \epsilon}$ $T_{a}^{j,l}$ and $T_a^{j,l}$ for $l > j\epsilon$ in what follows, and consequently our sparse bounds for $T_a$, will depend on this fixed value of $\epsilon$. This is the reason why we achieve sparse bounds only up to an endpoint (see Figure \ref{sparse region}).

\subsection*{$L^2\to L^2$ bounds} It is a classical result of Calderón and Vaillancourt \cite{CV1972} that the symbol classes $S^0_{\rho,\rho}$ are bounded on $L^2$ for $0 \leq \rho < 1$. This and an integration by parts argument yield the following $L^2$ bounds.
\begin{lemma}\label{ttlemma}
We have
\begin{align*}
\begin{cases}
\|\sum_{l\le j\epsilon}T_a^{j, l}\|_{L^2\to L^2}\lesssim_{\epsilon} 2^{jm},\\ 
\|T_a^{j, l}\|_{L^2\to L^2}\lesssim_{\epsilon}2^{10n(m-n)(j+l)},& l>j\epsilon.
\end{cases}
\end{align*}
\end{lemma}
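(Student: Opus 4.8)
The plan is to establish the second, off-diagonal estimate first by a non-stationary phase analysis of the kernel of $T_a^{j,l}$, and then to deduce the near-diagonal estimate from it together with the Calderón--Vaillancourt theorem. We only use the hypothesis $\delta\le\rho$ through the inclusion $S^m_{\rho,\delta}\subseteq S^m_{\rho,\rho}$, and we may take $\epsilon$ small.

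\textbf{The off-diagonal bound.} For $l>j\epsilon$ (so in particular $l\ge 1$), the operator $T_a^{j,l}$ is an integral operator with kernel
\[
K_{j,l}(x,y)=\int a(x,\xi)\wt\psi(2^{-j}\xi)\wt\psi(2^{-l+j\rho}(x-y))e^{i(x-y)\cdot\xi}\,d\xi ,
\]
supported where $|\xi|\sim 2^j$ and $|x-y|\sim 2^{l-j\rho}$. Since there $|\nabla_\xi[(x-y)\cdot\xi]|=|x-y|\sim 2^{l-j\rho}$ is large compared with the $\xi$-scale of the amplitude, I would integrate by parts $N$ times using the operator $\CL=\tfrac{1}{i}|x-y|^{-2}(x-y)\cdot\nabla_\xi$, which fixes $e^{i(x-y)\cdot\xi}$. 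Each application of $\CL^{*}$ costs a factor $|x-y|^{-1}$ and one $\xi$-derivative of $a(x,\xi)\wt\psi(2^{-j}\xi)$; as $a\in S^m_{\rho,\delta}$ and $\rho<1$, every such derivative gains a factor $2^{-j\rho}$ while the size stays $\lesssim 2^{jm}$ on the support. Accounting for the $\xi$-support (of volume $\sim 2^{jn}$), this gives
\[
|K_{j,l}(x,y)|\lesssim_N 2^{jn}2^{jm}\Big(\frac{2^{-j\rho}}{|x-y|}\Big)^{N}\sim 2^{jn}2^{jm}2^{-lN} ,
\]
and since $K_{j,l}(x,\cdot)$ and $K_{j,l}(\cdot,y)$ are supported on sets of measure $\lesssim 2^{(l-j\rho)n}$, Schur's test yields $\|T_a^{j,l}\|_{L^2\to L^2}\lesssim_N 2^{jn(1-\rho)}2^{jm}2^{l(n-N)}$. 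Using $l\ge j\epsilon$ to split $2^{l(n-N)}\le 2^{(n-N)l/2}\,2^{(n-N)\epsilon j/2}$ for $N>n$, and then taking $N=N(n,m,\rho,\epsilon)$ large enough that $n(1-\rho)+m+(n-N)\epsilon/2\le 10n(m-n)$ and $(n-N)/2\le 10n(m-n)$, one obtains the claimed estimate.

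\textbf{The near-diagonal bound.} I would write $\sum_{l\le j\epsilon}T_a^{j,l}=T_a^{j}-\sum_{l>j\epsilon}T_a^{j,l}$. For the first term, set $a_j:=a\cdot\wt\psi(2^{-j}\cdot)$; a direct Leibniz-rule computation (using that $a_j$ is supported in $|\xi|\sim 2^j$, that the cutoff contributes factors $2^{-j|\sigma|}\le(1+|\xi|)^{-\rho|\sigma|}$ there, and that $\delta\le\rho$ makes the factor $2^{j\delta|\nu|}$ admissible) shows that $2^{-jm}a_j\in S^0_{\rho,\rho}$ with seminorms bounded uniformly in $j$. By the Calderón--Vaillancourt theorem, $\|T_a^{j}\|_{L^2\to L^2}=2^{jm}\|T_{2^{-jm}a_j}\|_{L^2\to L^2}\lesssim 2^{jm}$ (the case $j=0$ is identical). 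For the second term, the off-diagonal bound and a geometric summation give $\|\sum_{l>j\epsilon}T_a^{j,l}\|_{L^2\to L^2}\lesssim\sum_{l>j\epsilon}2^{10n(m-n)(j+l)}\lesssim 2^{10n(m-n)j}$, and since $m<0$ one has $10n(m-n)<m$, so this is $\lesssim 2^{jm}$. Combining the two contributions proves the first estimate.

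\textbf{Main obstacle.} The point needing genuine care is that the naive non-stationary phase estimate produces decay only in $l$ (through $2^{-lN}$), whereas the resulting prefactor $2^{jn(1-\rho)}2^{jm}$ grows with $j$; one really has to exploit the slack in the restriction $l>j\epsilon$, together with the freedom to integrate by parts arbitrarily many times, in order to trade part of the $l$-decay for decay in $j$. This is exactly where the dependence of the implicit constant on $\epsilon$ (and hence the eventual loss of the endpoint in Theorem~\ref{mainps}) comes from. The remaining ingredients — the reduction of the frequency-localised symbol to $S^0_{\rho,\rho}$ and the bookkeeping of seminorms — are mechanical.
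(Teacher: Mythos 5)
Your proof is correct and follows essentially the same lines as the paper's: integration by parts in $\xi$ to obtain kernel decay for the off-diagonal terms $l>j\epsilon$, followed by the decomposition $\sum_{l\le j\epsilon}T_a^{j,l}=T_a^j-\sum_{l>j\epsilon}T_a^{j,l}$ and an appeal to the Calder\'on--Vaillancourt theorem (via the observation that $2^{-jm}a\wt\psi(2^{-j}\cdot)\in S^0_{\rho,\rho}$ uniformly) for $T_a^j$. The only differences are cosmetic: you phrase the off-diagonal estimate through Schur's test and an explicit non-stationary-phase operator $\CL$, whereas the paper applies Young's inequality and splits the gain $2^{-lN(\epsilon)}$ between the $\xi$- and $y$-integrals; both routes yield $\|T_a^{j,l}\|_{L^2\to L^2}\lesssim_\epsilon 2^{10n(m-n)(j+l)}$ by choosing $N=N(\epsilon)$ large.
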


\begin{proof}
For $l\ge j\epsilon$, we integrate by parts in the $\xi$ variable $N(\epsilon)$ many times to obtain
\begin{align*}
|T_a^{j, l}f(x)| & \lesssim\int\int|\nabla_{\xi}^{N(\epsilon)}[a(x, \xi)\widetilde{\psi}(2^{-j}\xi)]||x-y|^{-N(\epsilon)}|f(y)|\widetilde{\psi}(2^{-l+j\rho}(x-y))\,dy\,d\xi
\\
& \lesssim_{\epsilon}\|2^{-lN(\epsilon)/2}(1+|\cdot|)^m\widetilde{\psi}(2^{-j}\cdot)\|_{L^1(d\xi)}\int|f(y)|2^{-lN(\epsilon)/2}\widetilde{\psi}(2^{-l+j\rho}(x-y))\,dy
\\
& \lesssim \int|f(y)|2^{-lN(\epsilon)/2}\widetilde{\psi}(2^{-l+j\rho}(x-y))\,dy,
\end{align*}
where $N(\epsilon)$ is taken sufficiently large to guarantee that $2^{-lN(\epsilon)/2}2^{jm}\widetilde{\psi}(2^{-j}(\cdot)) \in L^1$ uniformly in $j$ and $l > j \epsilon$. Imposing also that $N(\epsilon)$ is such that
$2^{-lN(\epsilon)/4}\widetilde{\psi}(2^{-l+j\rho}(\cdot))\in L^1$, uniformly in $j$ and $l > j \epsilon$, and $2^{-lN(\epsilon)/4}\le 2^{10n(m-n)(j+l)}$, an application of Young's inequality yields
\begin{align*}
\|{T_a^{j, l}f}\|_2\lesssim_{\epsilon} 2^{10n(m-n)(j+l)}\|{f}\|_2, \qquad l>j\epsilon,
\end{align*}
as desired.
\newline
\indent 
To prove the first statement, note that for any fixed $j$, we have that $T_a^j=\sum_{l}T_a^{j, l}$ is a pseudodifferential operator associated to a symbol in the class $S_{\rho, \delta}^0$ with constant $2^{jm}$, so $\|\sum_lT_a^{j, l}\|_{L^2\to L^2}\lesssim 2^{jm}$ by the $L^2$ boundedness of the symbol classes. The result then follows from noting that $\sum_{l\le j\epsilon}T_a^{j, l}=\sum_lT_a^{j, l}-\sum_{l>j\epsilon}T_a^{j, l}$, as we just saw that the $L^2$ norm of $T_a^{j,l}$ for $l > j \epsilon$ decays geometrically in $l$.
\end{proof}

\subsection*{$L^1\to L^{\infty}$ bounds}

As in the proof of the $L^2\to L^2$ bounds, integrating by parts in the $\xi$ variable $N(\epsilon)$ many times for a sufficiently large $N(\epsilon)$ in the regime $l>j\epsilon$ leads to the estimate
\begin{align*}
\|T_a^{j, l}f\|_{\infty}\lesssim_{\epsilon} \sup_{x }\int|f(y)|2^{-lN(\epsilon)/2}\widetilde{\psi}(2^{-l+j\rho}(x-y))\,dy\lesssim_{\epsilon}2^{10n(m-n)(j+l)}\|{f}\|_1.
\end{align*}
For $l\le j\epsilon$, we may trivially bound
\begin{align*}
\Big\|\sum_{l\le j\epsilon}T_a^{j, l}f\Big\|_{\infty}\lesssim\int(1+ |\xi|)^m\widetilde{\psi}(2^{-j}\xi)\int|f(y)|\,dy\,d\xi
\lesssim 2^{jm+jn}\|f\|_1.
\end{align*}
We summarize this as the following lemma.
\begin{lemma}\label{oilemma}
We have
\begin{align*}
\begin{cases}
\|\sum_{l\le j\epsilon}T_a^{j, l}\|_{L^1\to L^{\infty}}\lesssim_{\epsilon} 2^{jm+jn},\\ 
\|T_a^{j, l}\|_{L^1\to L^{\infty}}\lesssim_{\epsilon}2^{10n(m-n)(j+l)},& l>j\epsilon.
\end{cases}
\end{align*}
\end{lemma}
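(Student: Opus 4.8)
\emph{Proof plan.} Since this is an $L^1\to L^\infty$ statement, both parts reduce to a pointwise estimate on the relevant kernel, so — unlike for Lemma~\ref{ttlemma} — no Calder\'on--Vaillancourt input is needed; one simply bounds an oscillatory integral in absolute value. I would handle the tail $l>j\epsilon$ by the same integration by parts in $\xi$ already used for Lemma~\ref{ttlemma}, and the bulk $\sum_{l\le j\epsilon}T_a^{j,l}$ by a crude estimate after first summing the spatial cutoffs.

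For $l>j\epsilon$, integrate by parts $N=N(\epsilon)$ times in $\xi$ in the defining double integral for $T_a^{j,l}$, using $\nabla_\xi e^{i(x-y)\cdot\xi}=i(x-y)\,e^{i(x-y)\cdot\xi}$; this produces a factor $|x-y|^{-N}$ and moves $N$ derivatives onto $a(x,\xi)\widetilde\psi(2^{-j}\xi)$, with no boundary terms since the latter has compact support in $\xi$. By the Leibniz rule and the symbol bounds \eqref{symbol}, on the annulus $|\xi|\sim 2^j$ one has $|\nabla_\xi^N[a(x,\xi)\widetilde\psi(2^{-j}\xi)]|\lesssim 2^{jm-jN\rho}$, the worst term being the one in which all $N$ derivatives land on $a$. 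Since $|x-y|\sim 2^{l-j\rho}$ on the support (in $y$) of $\widetilde\psi(2^{-l+j\rho}(x-y))$, the annulus has measure $\sim 2^{jn}$, and $\int|f(y)|\,\widetilde\psi(2^{-l+j\rho}(x-y))\,dy\lesssim\|f\|_1$, we obtain
\[
\|T_a^{j,l}f\|_\infty\lesssim_\epsilon 2^{jm-jN\rho}\cdot 2^{jn}\cdot 2^{-N(l-j\rho)}\,\|f\|_1=2^{j(m+n)-Nl}\,\|f\|_1,
\]
the $\rho$-dependent exponents cancelling. It remains to choose $N=N(\epsilon)$ so that $j(m+n)-Nl\le 10n(m-n)(j+l)$ whenever $l>j\epsilon$; rearranging, this reads $j\,[\,m+n-10n(m-n)\,]\le l\,[\,N+10n(m-n)\,]$, where the left bracket is a fixed positive number (using $m<0$ and $n\ge 1$), so since $l>j\epsilon$ it suffices that $\epsilon\,[\,N+10n(m-n)\,]$ exceeds it, which holds once $N$ is taken large depending on $\epsilon,m,n$. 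This is the one point requiring care, and the only place the threshold $l>j\epsilon$ is used; it is also the source of the $\epsilon$-loss that ultimately shrinks the sparse region in Theorem~\ref{mainps}.

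For the bulk $\sum_{l\le j\epsilon}T_a^{j,l}$, first sum the spatial cutoffs: as $\widetilde\psi=\psi-\psi(2\,\cdot)$, the sum $\sum_{l\le j\epsilon}\widetilde\psi(2^{-l+j\rho}(x-y))$ telescopes to a single dilate of $\psi$ and is therefore bounded by $\|\psi\|_\infty=1$ uniformly in $j,x,y$. Substituting this into the double integral for $\sum_{l\le j\epsilon}T_a^{j,l}f$ and taking absolute values,
\[
\Big|\sum_{l\le j\epsilon}T_a^{j,l}f(x)\Big|\le\|f\|_1\int_{|\xi|\sim 2^j}(1+|\xi|)^m\,d\xi\lesssim 2^{j(m+n)}\,\|f\|_1,
\]
using $(1+|\xi|)^m\sim 2^{jm}$ on the annulus (recall $m<0$) and its measure $\sim 2^{jn}$; the $j=0$ term, where $\psi(\xi)$ replaces $\widetilde\psi(2^{-j}\xi)$, is identical and gives the bound $1$. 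Combining the two displays proves the lemma. I do not anticipate any real obstacle: the whole content is the bookkeeping around $N(\epsilon)$, which mirrors the proof of Lemma~\ref{ttlemma}.
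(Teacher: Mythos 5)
Your proposal is correct and follows essentially the same route as the paper: integration by parts in $\xi$ for the tail $l>j\epsilon$, and a trivial kernel bound for the bulk after observing that the spatial cutoffs sum to a single bump. Your write-up makes explicit two small points the paper leaves implicit — the telescoping identity $\sum_{l\le L}\widetilde\psi(2^{-l+j\rho}u)=\psi(2^{-L+j\rho}u)$ and the precise bookkeeping showing that $j(m+n)-Nl\le 10n(m-n)(j+l)$ for $l>j\epsilon$ once $N=N(\epsilon)$ is large — but neither is a deviation in approach.
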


\subsection*{$L^{1}\to L^{1}$ bounds}
As above, integrating by parts in the $\xi$ variable $N(\epsilon)$ many times for a sufficiently large $N(\epsilon)$ in the regime $l>j\epsilon$ leads to the estimate
\begin{align*}
\|T_a^{j, l}f\|_{1}\lesssim_{\epsilon} \int \int|f(y)|2^{-lN(\epsilon)/2}\widetilde{\psi}(2^{-l+j\rho}(x-y))\,dy dx\lesssim_{\epsilon} 2^{10n(m-n)(j+l)}\|{f}\|_{1}.
\end{align*}
Reasoning as in Lemma \ref{ttlemma}, the $L^1$ boundedness for $\sum_{l \leq j\epsilon} T_a^{j,l}$ will follow then from that of $T_a^j$. 
\begin{lemma}\label{iilemma}
For any $\varepsilon>0$, we have
\begin{align*}
\begin{cases}
\|\sum_{l\le j\epsilon}T_a^{j, l}\|_{L^{1}\to L^{1}}\lesssim_{\epsilon} 2^{j(m+n(1-\rho)/2 + \varepsilon)},\\ 
\|T_a^{j, l}\|_{L^{1}\to L^{1}}\lesssim_{\epsilon}2^{10n(m-n)(j+l)},& l>j\epsilon.
\end{cases}
\end{align*}
\end{lemma}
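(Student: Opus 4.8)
The proof splits into the two displayed estimates, and the first one reduces to a single $L^{1}\to L^{1}$ bound for $T_a^j$. For the bound on $\|T_a^{j,l}\|_{L^1\to L^1}$ in the regime $l>j\epsilon$, nothing new is needed: this is exactly the computation already displayed just before the statement. One integrates by parts $N(\epsilon)$ times in $\xi$ — legitimate because $|x-y|\gtrsim 2^{l-j\rho}$ on $\supp\wt\psi(2^{-l+j\rho}(x-y))$ — dominates the resulting $\xi$-integral by a constant after pulling out $2^{-lN(\epsilon)/2}2^{jm}$, and chooses $N(\epsilon)$ large enough that $2^{-lN(\epsilon)/4}\le 2^{10n(m-n)(j+l)}$ uniformly in $j$ and $l>j\epsilon$; Fubini (or Young's inequality) in the remaining $y$-integral then gives the claimed geometric decay, just as for the $L^2$ and $L^1\to L^\infty$ bounds.

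For the first estimate the plan is, following the reduction indicated in the text, to write $\sum_{l\le j\epsilon}T_a^{j,l}=T_a^j-\sum_{l>j\epsilon}T_a^{j,l}$ (the full sum over $l\in\Z$ telescopes to $T_a^j$ since $\sum_{l}\wt\psi(2^{-l+j\rho}z)\equiv 1$ for $z\ne 0$). By the bound just discussed, $\sum_{l>j\epsilon}\|T_a^{j,l}\|_{L^1\to L^1}$ is a convergent geometric series dominated by $2^{10n(m-n)j}$, which is far below $2^{j(m+n(1-\rho)/2+\varepsilon)}$; so it suffices to prove $\|T_a^j\|_{L^1\to L^1}\lesssim_\varepsilon 2^{j(m+n(1-\rho)/2+\varepsilon)}$. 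Here $T_a^j=T_{\sigma_j}$ with $\sigma_j(x,\xi)=a(x,\xi)\wt\psi(2^{-j}\xi)$ lying in $2^{jm}S^0_{\rho,\delta}$ and supported in $|\xi|\sim 2^j$; integrating by parts in $\xi$ and using $|\partial_\xi^\sigma\sigma_j|\lesssim 2^{jm}2^{-j\rho|\sigma|}$, its kernel obeys $|K_j(x,y)|\lesssim_N 2^{jm}2^{jn}(1+2^{j\rho}|x-y|)^{-N}$ uniformly in $x$.

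I would then estimate $\|T_a^j\|_{L^1\to L^1}=\sup_y\int|K_j(x,y)|\,dx$ by Cauchy--Schwarz at the natural spatial scale $2^{-j\rho}$: on each annulus $A_k=\{|x-y|\sim 2^k2^{-j\rho}\}$, $k\ge 0$, one has $\int_{A_k}|K_j(x,y)|\,dx\le|A_k|^{1/2}\big(\int_{A_k}|K_j(x,y)|^2\,dx\big)^{1/2}$ with $|A_k|\sim 2^{kn}2^{-j\rho n}$; the $L^\infty$ kernel bound supplies a rapid factor $2^{-kN}$ on $A_k$ for $k$ large, while on the main block $k\le 0$ one uses the pointwise bound $\int|K_j|^2\,dx\lesssim 2^{2jm}2^{jn}$. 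Summing the geometric series in $k$ yields $\|T_a^j\|_{L^1\to L^1}\lesssim 2^{-j\rho n/2}2^{jm}2^{jn/2}=2^{j(m+n(1-\rho)/2)}$, with any loss from dyadic summations absorbed into the $2^{j\varepsilon}$ factor of the statement.

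The main obstacle is the $L^2$ kernel estimate $\int|K_j(x,y)|^2\,dx\lesssim 2^{2jm}2^{jn}$: because $\sigma_j$ genuinely depends on $x$, $K_j(\cdot,y)$ is not a translate of a fixed function, so one cannot invoke Plancherel directly. The remedy is to freeze the $x$-variable of the symbol at the spatial scale $2^{-j\rho}$ — partition $\R^n$ into cubes $Q$ of side $\sim 2^{-j\rho}$ with centers $x_Q$ and replace $\sigma_j(x,\xi)$ by $\sigma_j(x_Q,\xi)$ on $Q$. The error symbol has $\xi$-derivatives bounded by $2^{j(\delta-\rho)}2^{jm}2^{-j\rho|\sigma|}\le 2^{jm}2^{-j\rho|\sigma|}$, which is exactly where the hypothesis $\delta\le\rho$ is used: it forces the symbol to be essentially constant in $x$ over a spatial cell of size $2^{-j\rho}$. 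For the frozen pieces one gets $\int|K_{\sigma_j(x_Q,\cdot)}(x-y)|^2\,dx=\|\sigma_j(x_Q,\cdot)\|_{L^2}^2\lesssim 2^{2jm}2^{jn}$ by Plancherel. When $\delta<\rho$ the error gains a genuine power $2^{j(\delta-\rho)}$ and is iterated away; at the endpoint $\delta=\rho$ one uses slightly finer cubes (side $2^{-j\rho-K}$) together with a Neumann-series argument, or else Taylor-expands the symbol to finite order, the resulting $2^{j\varepsilon}$ being precisely the loss recorded in the statement.
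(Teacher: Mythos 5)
Your treatment of the $l>j\epsilon$ tail and the reduction to $\|T_a^j\|_{L^1\to L^1}\lesssim 2^{j(m+n(1-\rho)/2+\varepsilon)}$ both match the paper. Where you diverge, and where a genuine gap appears, is in how you try to prove that single-scale $L^1$ bound. You estimate the Schur quantity $\sup_y\int|K_j(x,x-y)|\,dx$ directly, which is the \emph{hard} Schur direction for a pseudodifferential kernel since $x$ enters $K_j$ in both slots. The paper instead proves the $L^\infty\to L^\infty$ bound, i.e.\ $\sup_x\int|K_j(x,z)|\,dz$: there $x$ is a frozen parameter, so $z\mapsto K_j(x,z)$ is the inverse Fourier transform of the \emph{fixed} symbol $\sigma_j(x,\cdot)$, and Cauchy--Schwarz at scale $2^{-j\rho}$ plus Plancherel give the bound uniformly in $x$ with no symbol freezing at all. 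Summing in $j$ yields $T_a\colon L^\infty\to L^\infty$ for $m<-n(1-\rho)/2$; since the Hörmander classes are closed under adjoints, duality yields $T_a\colon L^1\to L^1$ for the same range; and rescaling the truncated symbol $a\,\widetilde\psi(2^{-j}\cdot)$ by $2^{-j(m+n(1-\rho)/2+\varepsilon)}$, which places it in $S^{-n(1-\rho)/2-\varepsilon}_{\rho,\delta}$ with uniform seminorms, recovers the claimed single-scale bound.

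Your proposed repair at $\delta=\rho$ does not close. Freezing at scale $2^{-j\rho}$ leaves an error of relative size $O(2^{j(\delta-\rho)})$, which is no gain when $\delta=\rho$. Passing to cubes of side $2^{-j\rho-K}$ gains a relative $2^{-K}$ in the error symbol, but the frozen operator $S_jf(x)=\int K_{\sigma_j(x_{Q(x)},\cdot)}(x-y)f(y)\,dy$ is still not a convolution: running Cauchy--Schwarz cube-by-cube and summing over the $\sim 2^{Kn}$ cubes that tile $\{|x-y|\lesssim 2^{-j\rho}\}$ costs a factor $2^{Kn/2}$, so $\|S_j\|_{L^1\to L^1}\lesssim 2^{Kn/2}\,2^{j(m+n(1-\rho)/2)}$ and the per-step bookkeeping $2^{Kn/2-K}$ diverges as $K\to\infty$ for every $n\ge 2$. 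Taylor expansion fares no better, since each additional $x$-derivative costs $2^{j\delta}$ against the spatial factor $2^{-j\rho}$, again a wash at $\delta=\rho$; and the error symbol $\sigma_j(x,\xi)-\sigma_j(x_{Q(x)},\xi)$ has $x$-derivatives no smaller than those of $\sigma_j$, so the Neumann iteration cannot be restarted with better constants. The $2^{j\varepsilon}$ loss in the statement is nowhere near enough to absorb any of this. The route through $L^\infty$ and the adjoint symbol calculus is exactly what lets the paper avoid the two-slot $x$-dependence that defeats the direct $L^1$ kernel estimate.
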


\begin{remark}
It is possible to prove the above Lemma with $\varepsilon=0$ using the result of Fefferman \cite{Fe73} that asserts that $T_a:H^1 \to L^1$ for $a \in S^{-n(1-\rho)/2}_{\rho,\delta}$. However, for completeness of the paper, we prefer to give the proof for the above easier lemma than appealing to the result in \cite{Fe73}. The presence of $\varepsilon$ will play no role in determining the (open) trapezoid $\mathcal{T}_{m,\rho,n}$ for which $(r,s')$ sparse bounds hold, provided $\varepsilon>0$ is taken sufficiently small. 
\end{remark}

\begin{proof}
It suffices to show that
\begin{align}\label{eq:L1goal}
\|T_a^j\|_{L^{1}\to L^1}\lesssim 2^{j(m+n(1-\rho)/2+\varepsilon)}
\end{align}
for any $\varepsilon>0$. This in turn follows from
\begin{equation}\label{eq:Linftygoal}
\| T_a^j \|_{L^\infty \to L^\infty} \lesssim 2^{j(m+n(1-\rho)/2)},
\end{equation}
as then one can sum geometrically in $j$ and obtain $L^\infty$ boundedness for the symbol classes $S^m_{\rho,\delta}$ with $m < -n(1-\rho)/2$. By duality, such symbol classes are also bounded on $L^1$ and one readily deduces \eqref{eq:L1goal}.

The inequality \eqref{eq:Linftygoal} may be found in \cite{Fe73}; we include the simple argument for interest of the reader. Write $T_{a}^{j}$ as
$$
T_{a}^{j}f(x)=\int_{\R^n} K_j(x,x-y) f(y)dy,
$$
where
$$
K_j(x,z):=\int_{\R^n} e^{i z \cdot \xi} a(x,\xi) \widetilde{\psi} (2^{-j}\xi) d\xi.
$$
$T_{a}^j$ may be interpreted as the convolution of the function $K(x, \cdot)$ with $f$ evaluated at the point $x$, so it suffices to compute $\|K(x,\cdot)\|_1$ uniformly in $x$.
\newline
\indent
To this end, for some $t>0$ to be determined later, split the integral in two parts,
\begin{align*}
\int_{\R^n} |K_j(x,z)|dz = \int_{|z|<t} |K_j(x,z)|dz + \int_{|z|>t} |K_j(x,z)|dz.
\end{align*}
For the first term, the Cauchy--Schwarz inequality, Plancherel's theorem, and the differential inequalities \eqref{symbol} with $\gamma=0$ yield
\begin{align*}
\int_{|z|<t} |K_j(x,z)|dz & \leq t^{n/2} \Big( \int_{\R^n} |K_j(x,z)|^2dz\Big)^{1/2} \\
& = t^{n/2}  \int_{\R^n} |a(x,\xi)|^2 |\widetilde{\psi}(2^{-j}\xi)|^2d\xi\Big)^{1/2}  \\
& \lesssim t^{n/2} 2^{jn/2} 2^{jm}.
\end{align*}
For the second term we proceed similarly, but exploiting the differential inequalities \eqref{symbol} for $\gamma \neq 0$. Thus,
\begin{align*}
\int_{|z|>t} |K_j(x,z)|dz & \leq \Big(\int_{|z|>t} \frac{1}{|z|^{2N}} \Big)^{1/2} \Big( \int_{\R^n}|z|^{2N} |K_j(x,z)|^2dz\Big)^{1/2} \\
 &\lesssim t^{n/2-N} \Big( \int_{\R^n}\sum_{\gamma \in \N^d: |\gamma|=2N} |D^\gamma [a(x,\xi)\widetilde{\psi}(2^{-j}\xi)]|^2d\xi\Big)^{1/2} \\
& \lesssim t^{n/2-N} 2^{jn/2} 2^{jm - j\rho N},
\end{align*}
provided $N > n/2$. Choosing $t=2^{-j\rho}$, one concludes that
$$
\int_{\R^n} |K_j(x,z)|dz \lesssim 2^{jm+jn(1-\rho)/2}.
$$
\end{proof}


\subsection*{Obtaining $L^r\to L^s$ bounds for $1 \le r\le s \leq 2$}
Interpolating Lemma \ref{ttlemma} and Lemma \ref{iilemma} leads to the bounds
\begin{align*}
\begin{cases}
\| \sum_{l \leq j \epsilon} T_a^{j,l} \|_{L^s \to L^s} \lesssim_{\epsilon} 2^{jm + j(n(1-\rho)/2 + \varepsilon)(\frac{2}{s}-1)} 
\\ 
\|T_a^{j, l}\|_{L^s\to L^s}\lesssim_{\epsilon}2^{10n(m-n)(j+l)},& l>j\epsilon
\end{cases}
\end{align*}
for $1 \leq s \leq 2$.

Using Bernstein's inequality, we have
\begin{align*}
\|T_a^{j, l}f\|_{L^s} & \lesssim_{\epsilon}\|T_a^{j, l}\|_{L^s\to L^s}\|f\ast\mathcal{F}[(\psi(2^{-j-100}\cdot)-\psi(2^{-j+100}\cdot))]\|_{L^s}
\\
& \lesssim\|T_a^{j, l}\|_{L^s\to L^s}2^{-jn(\frac{1}{s}-\frac{1}{r})}\|f\|_{L^r}.
\end{align*}
Similarly,
\begin{align*}
\Big\| \sum_{l\le j\epsilon}T_a^{j, l}f \Big\|_{L^s}& \lesssim_{\epsilon}  \Big\|\sum_{l\le j\epsilon}T_a^{j, l} \Big\| _{L^s\to L^s} \| f\ast\mathcal{F}[(\psi(2^{-j-100}\cdot)-\psi(2^{-j+100}\cdot))]\|_{L^s}
\\
& \lesssim\Big\| \sum_{l\le j\epsilon}T_a^{j, l}\Big\|_{L^s\to L^s}2^{-jn(\frac{1}{s}-\frac{1}{r})}\Norm{f}_{L^r}.
\end{align*}
This leads to the following lemma.

\begin{lemma}\label{qrlemma}
For $1\le r\le s \le 2$, we have
\begin{align*}
\begin{cases}
\|\sum_{l\le j\epsilon}T_a^{j, l}\|_{L^r\to L^s}\lesssim_{\epsilon}2^{jm}2^{-jn(\frac{1}{s}-\frac{1}{r})}2^{j(n(1-\rho)/2 + \varepsilon)(\frac{2}{s}-1)}
\\ 
\|T_a^{j, l}\|_{L^r\to L^s}\lesssim_{\epsilon}2^{-jn(\frac{1}{s}-\frac{1}{r})}2^{10n(m-n)(j+l)},& l>j\epsilon.
\end{cases}
\end{align*}
\end{lemma}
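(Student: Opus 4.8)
The plan is to assemble the claimed $L^r\to L^s$ bounds from two ingredients that are already available: the single-scale $L^s\to L^s$ estimates obtained by Riesz--Thorin interpolation between Lemmas \ref{ttlemma} and \ref{iilemma}, and a Littlewood--Paley (Bernstein) reduction that upgrades $L^s$ control to $L^r\to L^s$ control at the cost of the factor $2^{-jn(1/s-1/r)}$ appearing in the statement.

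First I would interpolate. Both $T_a^{j,l}$ and the aggregate $\sum_{l\le j\epsilon}T_a^{j,l}$ are linear, so complex interpolation applies directly. For $1\le s\le 2$, writing $1/s=\theta+(1-\theta)/2$ gives $\theta=2/s-1\in[0,1]$; interpolating the $L^1\to L^1$ bound of Lemma \ref{iilemma} against the $L^2\to L^2$ bound of Lemma \ref{ttlemma} produces, for the aggregate,
\[
\Big\|\sum_{l\le j\epsilon}T_a^{j,l}\Big\|_{L^s\to L^s}\lesssim_{\epsilon}\big(2^{j(m+n(1-\rho)/2+\varepsilon)}\big)^{\theta}\big(2^{jm}\big)^{1-\theta}=2^{jm}\,2^{j(n(1-\rho)/2+\varepsilon)(2/s-1)},
\]
while for the tail pieces $l>j\epsilon$ both endpoints coincide at $2^{10n(m-n)(j+l)}$, so the interpolant is unchanged. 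This is exactly the display preceding the statement.

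Next I would perform the Bernstein reduction. Since $\widehat f$ enters the definitions of $T_a^{j,l}$ and of $\sum_{l\le j\epsilon}T_a^{j,l}$ only through the factor $\wt\psi(2^{-j}\cdot)$, setting $\phi_j:=\mathcal{F}^{-1}\big[\psi(2^{-j-100}\cdot)-\psi(2^{-j+100}\cdot)\big]$ --- a Littlewood--Paley bump identically $1$ on the support of $\wt\psi(2^{-j}\cdot)$ --- one has $T_a^{j,l}f=T_a^{j,l}(f\ast\phi_j)$ and likewise for the aggregate. Because $f\ast\phi_j$ has frequency support in $\{|\xi|\sim 2^j\}$, Bernstein's inequality (equivalently, Young's inequality together with $\|\phi_j\|_{L^q}\lesssim 2^{jn(1/r-1/s)}$ for $1/q=1-(1/r-1/s)$) yields $\|f\ast\phi_j\|_{L^s}\lesssim 2^{-jn(1/s-1/r)}\|f\|_{L^r}$ whenever $1\le r\le s$. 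Composing with the $L^s\to L^s$ bounds gives
\[
\|T_a^{j,l}f\|_{L^s}\le\|T_a^{j,l}\|_{L^s\to L^s}\|f\ast\phi_j\|_{L^s}\lesssim_{\epsilon}2^{-jn(1/s-1/r)}\|T_a^{j,l}\|_{L^s\to L^s}\|f\|_{L^r},
\]
and identically with $T_a^{j,l}$ replaced by $\sum_{l\le j\epsilon}T_a^{j,l}$. Substituting the interpolation bounds from the first step then gives precisely the two estimates of the lemma.

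I do not expect a genuine obstacle here: the argument is bookkeeping of exponents once Lemmas \ref{ttlemma} and \ref{iilemma} are in hand. The one point deserving a line of justification is the identity $T_a^{j,l}f=T_a^{j,l}(f\ast\phi_j)$, which is immediate from the frequency cutoff in the definition of $T_a^{j,l}$; and one should keep in mind that since $s\ge r$ the factor $2^{-jn(1/s-1/r)}\ge 1$ is a genuine loss, absorbed only because $m<0$ is taken sufficiently negative relative to $n(1-\rho)(1/r-1/s)$ in the range where the final sparse bounds are claimed.
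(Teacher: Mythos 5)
Your proposal reproduces the paper's argument exactly: interpolate the $L^1\to L^1$ and $L^2\to L^2$ single-scale bounds of Lemmas \ref{iilemma} and \ref{ttlemma} to get the $L^s\to L^s$ estimates, then downgrade the input to $L^r$ via Bernstein's inequality applied to the Littlewood--Paley bump $\psi(2^{-j-100}\cdot)-\psi(2^{-j+100}\cdot)$. One small caveat worth stating: the identity $T_a^{j,l}f=T_a^{j,l}(f\ast\phi_j)$ is not literally exact, since the spatial cutoff $\wt\psi(2^{-l+j\rho}(x-y))$ smears the symbol $a(x,\xi)\wt\psi(2^{-j}\xi)$ in frequency by a (rapidly decaying, non-compactly-supported) Schwartz tail at scale $2^{-l+j\rho}$; this smearing is $\ll 2^j$ in the relevant regime, so the claim holds up to negligible error, and the paper makes the same implicit use of it, but calling it ``immediate'' overstates the case slightly.
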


\subsection*{Obtaining $L^r\to L^s$ bounds for $1 \le r\le 2 \leq s \leq r' $}

Interpolating Lemma \ref{ttlemma} and Lemma \ref{oilemma} leads to the bounds
\begin{align}\label{rrprime}
\begin{cases}
\| \sum_{l \leq j \epsilon} T_a^{j,l} \|_{L^r \to L^{r'}} \lesssim_{\epsilon} 2^{jm + jn(\frac{2}{r}-1)}
\\ 
\|T_a^{j, l}\|_{L^r\to L^{r'}}\lesssim_{\epsilon}2^{10n(m-n)(j+l)},& l>j\epsilon
\end{cases}
\end{align}
for $1 \leq r \leq 2$. Reasoning as above, Lemma \ref{ttlemma} and Bernstein's inequality leads to the bounds
\begin{align}\label{2r}
\begin{cases}
\| \sum_{l \leq j \epsilon} T_a^{j,l} \|_{L^r \to L^{2}} \lesssim_{\epsilon} 2^{jm -  jn(\frac{1}{2} - \frac{1}{r})}
\\ 
\|T_a^{j, l}\|_{L^r\to L^{2}}\lesssim_{\epsilon}2^{10n(m-n)(j+l)} 2^{-jn(\frac{1}{2}-\frac{1}{r})} ,& l>j\epsilon.
\end{cases}
\end{align}
Interpolating \eqref{rrprime} and \eqref{2r} leads to the following estimate.

\begin{lemma}\label{r2slemma}
For $1\le r\le 2 \le s \le r'$, we have
\begin{align*}\label{r2s}
\begin{cases}
\|\sum_{l\le j\epsilon}T_a^{j, l}\|_{L^r\to L^s}\lesssim_{\epsilon}2^{jm - jn(\frac{1}{s} - \frac{1}{r})}
\\ 
\|T_a^{j, l}\|_{L^r\to L^s}\lesssim_{\epsilon}2^{-jn(\frac{1}{s}-\frac{1}{r})}2^{10n(m-n)(j+l)},& l>j\epsilon.
\end{cases}
\end{align*}
\end{lemma}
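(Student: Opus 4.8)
The plan is to deduce Lemma \ref{r2slemma} from a single application of the Riesz--Thorin interpolation theorem to the pair \eqref{rrprime} and \eqref{2r}, regarding $T_a^{j,l}$ (and $\sum_{l\le j\epsilon}T_a^{j,l}$) as the fixed linear operator and exploiting that the two endpoint estimates share the same domain space $L^r$. Concretely, for fixed $1\le r\le 2$ and a parameter $\theta\in[0,1]$, one interpolates the $L^r\to L^{r'}$ bound \eqref{rrprime} (the endpoint $\theta=0$) with the $L^r\to L^2$ bound \eqref{2r} (the endpoint $\theta=1$); since the domain exponent is unchanged, this yields an $L^r\to L^s$ bound with $1/s=(1-\theta)/r'+\theta/2$. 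As $\theta$ runs over $[0,1]$ the target exponent sweeps $1/s\in[1/r',1/2]$, i.e.\ $s\in[2,r']$, which is exactly the range in the lemma. Before tracking constants it is worth recording the consistency check that the two endpoints $s=2$ and $s=r'$ of the conclusion reduce, after using $1/r'=1-1/r$, to \eqref{2r} and \eqref{rrprime} respectively.

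For the operator $\sum_{l\le j\epsilon}T_a^{j,l}$, the interpolated constant is the geometric mean $2^{jm}\,\bigl(2^{jn(2/r-1)}\bigr)^{1-\theta}\bigl(2^{-jn(1/2-1/r)}\bigr)^{\theta}$. Writing $1/s=(1-\theta)(1-1/r)+\theta/2$ and simplifying, the exponent of the $2^{jn}$ factor is seen to equal $1/r-1/s$, so that the constant is precisely $2^{jm-jn(1/s-1/r)}$, as claimed. For the off-scale pieces $T_a^{j,l}$ with $l>j\epsilon$, both endpoint constants carry the common factor $2^{10n(m-n)(j+l)}$, and the geometric mean is $2^{10n(m-n)(j+l)}\,2^{-\theta jn(1/2-1/r)}$; since $j\ge 0$ and $\theta(1/r-1/2)\le 1/r-1/s$ (the gap being $(1-\theta)(2/r-1)\ge 0$ because $r\le 2$), this is dominated by $2^{10n(m-n)(j+l)}\,2^{-jn(1/s-1/r)}$, which is the stated bound. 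This last step is a mild overestimate, but it is harmless since only the rapid decay $2^{10n(m-n)(j+l)}$ in $l$ will be used later; alternatively, one can bypass interpolation for the $l>j\epsilon$ term entirely by combining the $L^s\to L^s$ bound $2^{10n(m-n)(j+l)}$ (obtained from Lemmas \ref{iilemma} and \ref{ttlemma} together with duality, valid for all $1\le s\le\infty$) with Bernstein's inequality applied to the frequency-localised input, exactly as in the proof of Lemma \ref{qrlemma}; this route produces the claimed form on the nose.

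I do not expect any genuine obstacle: the entire content is bookkeeping of exponents through Riesz--Thorin, and the only point demanding a moment's care is verifying the elementary inequality $2^{-\theta jn(1/2-1/r)}\le 2^{-jn(1/s-1/r)}$ used for the $l>j\epsilon$ pieces (equivalently $\theta(1/r-1/2)\le 1/r-1/s$) throughout the parameter range $1\le r\le 2\le s\le r'$, which follows from the displayed identity for $1/s$ and the sign of $2/r-1$.
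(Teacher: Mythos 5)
Your proof is correct and is exactly the paper's intended argument: the paper merely says ``Interpolating \eqref{rrprime} and \eqref{2r} leads to the following estimate,'' and you have filled in the bookkeeping via a single Riesz--Thorin application with common domain $L^r$. Your observation that for the $l>j\epsilon$ pieces the geometric mean actually gives the slightly better exponent $2^{-\theta jn(1/2-1/r)}$ rather than $2^{-jn(1/s-1/r)}$ (the gap being $(1-\theta)(2/r-1)\ge 0$) is accurate; the stated constant is a harmless overestimate which the paper does not comment on, and as you note only the rapid $2^{10n(m-n)(j+l)}$ decay matters downstream.
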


\subsection*{Obtaining geometrically decaying sparse bounds}
The sparse domination for $T_a$ follows from establishing the corresponding sparse domination results for $\sum_{l \leq j \epsilon} T_{a}^{j,l}$ and $\sum_{l > j \epsilon} T_a^{j,l}$. Note that we have much better $L^r \to L^s$ estimates for the terms $T_a^{j, l}$ with $l>j\epsilon$, so these terms will not determine the values of $r$ and $s$ for which we achieve a $(r, s')$ sparse domination result, and we will deal with those tails later.
\newline
\indent
We first invoke the standard ``$3^n$ grid trick". Define the standard universal shifted dyadic grids $\mathcal{D}_{\vec{v}}$ as
\begin{align*}
\bigg\{2^{k}\bigg[m_1+\frac{v_1}{3}, m_1+1+\frac{v_1}{3}\bigg]\times\cdots\times 2^k\bigg[m_n+\frac{v_n}{3}, m_n+1+\frac{v_n}{3}\bigg], k\in\mathbb{Z}, \vec{m}\in\mathbb{Z}^n\bigg\}
\end{align*}
where $\vec{v}\in\{0, 1, 2\}^n$. We can then decompose each operator $T_a^{j, l}$ as
\begin{align*}
T_a^{j, l}=\sum_{\vec{v}}T_{a, \vec{v}}^{j, l}
\end{align*}
where we define $T_{a, \vec{v}}^{j, l}$ by
\begin{align*}
T_{a, \vec{v}}^{j, l}f:=\sum_{\substack{Q\in\mathcal{D}_{\vec{v}}\\l(Q)=2^{\lfloor{-j\rho+j\epsilon +10}\rfloor}}}T_{a}^{j, l}(f\chi_{\frac{1}{3}Q}) \qquad \qquad \text{if} \:\: l \leq j \epsilon,
\end{align*}
and
\begin{align*}
T_{a, \vec{v}}^{j, l}f:=\sum_{\substack{Q\in\mathcal{D}_{\vec{v}}\\l(Q)=2^{\lfloor{-j\rho+l+10}\rfloor}}}T_{a}^{j, l}(f\chi_{\frac{1}{3}Q}) \qquad  \qquad \text{if} \:\: l > j \epsilon.
\end{align*}
We choose to use this decomposition because now the support of each term $T_{a}^{j, l}(f\chi_{\frac{1}{3}Q})$ lies entirely in $Q$. Clearly, it suffices to prove the desired results with the standard dyadic grid replaced with a specific shifted dyadic grid $\mathcal{D}_{\vec{v}}$ for some fixed choice of $\vec{v}\in\{0, 1, 2\}^n$, and with the operator $T_a$ replaced with the operator $T_{a, \vec{v}}$ defined by
\begin{align*}
T_{a, \vec{v}}:=\sum_{j, l}T_{a, \vec{v}}^{j, l}.
\end{align*}
Thus, in what follows, we will suppress the additional subscript $\vec{v}$ in our notation and write $T_a$ and $T_{a}^{j, l}$ in place of $T_a^{\vec{v}}$ and $T_{a, \vec{v}}^{j, l}$ and also implicitly assume that our dyadic cubes lie in the shifted dyadic grid $\mathcal{D}_{\vec{v}}$, for some specific choice of $\vec{v}$.

With this in mind, observe first that the localisation and boundedness of $\sum_{l \leq j \epsilon} T_a^{j,l}$ gives the domination
\begin{align*}
\Big<\sum_{l \leq j \epsilon} T_a^{j,l} f, g\Big> & \lesssim\sum_{j\ge 0}\sum_{\substack{Q\text{ dyadic}: \\ \ell(Q)= 2^{\lfloor{-j\rho+j\epsilon+10}\rfloor}}}\Big\| \sum_{l\le j\epsilon}T_a^{j, l}f\Big\|_{L^s(Q)}\Norm{g}_{L^{s'}(Q)}
\\
 & \lesssim_{\epsilon}\sum_{j\ge 0}\sum_{\substack{Q\text{ dyadic}: \\ \ell(Q)= 2^{\lfloor{-j\rho+j\epsilon+10\rfloor}}}} \!\!\!\!  |Q|^{1/r+1/s'-1}\Big\|\sum_{l\le j\epsilon}T_a^{j, l}\Big\|_{L^r\to L^s}|Q| \left<f\right>_{r, Q}\left<g\right>_{s', Q}.
\end{align*}

To obtain $(r, s')$ sparse bounds for $1\le r\le s \le 2$, we use Lemma \ref{qrlemma},
\begin{align*}
\Big<\sum_{l \leq j \epsilon} T_a^{j,l}f, g\Big> 
\lesssim_{\epsilon}\sum_{j\ge 0}2^{(-j\rho+j\epsilon)n(1/r-1/s)}2^{jm}2^{-jn(\frac{1}{s}-\frac{1}{r})}2^{j(n(1-\rho)/2 + \varepsilon)(2/s-1)}
\\
\times\sum_{\substack{Q\text{ dyadic}: \\ \ell(Q)= 2^{\lfloor{-j\rho+j\epsilon+10}\rfloor}}}|Q|\left<f\right>_{r, Q}\left<g\right>_{s', Q}.
\end{align*}
Note that this gives geometrically decaying $(r, s')$ sparse bounds for sufficiently small choice of $\epsilon>0$ and $\varepsilon>0$ whenever we have
$$
m<-n(1-\rho)(1/r-1/2), \qquad 1\le r\le s \le 2.
$$

To obtain $(r, s')$ sparse bounds for $1\le r\le 2 \le s \le r'$, we use Lemma \ref{r2slemma},
\begin{align*}
\Big<\sum_{l \leq j \epsilon} T_a^{j,l}f, g\Big> 
\lesssim_{\epsilon} & \sum_{j\ge 0}2^{(-j\rho+j\epsilon)n(1/r-1/s)}2^{jm - jn(\frac{1}{s} - \frac{1}{r})}
\\
& \quad \quad \quad  \quad \quad \quad \quad \times\sum_{\substack{Q\text{ dyadic}: \\ \ell(Q)= 2^{\lfloor{-j\rho+j\epsilon+10}\rfloor}}}|Q|\left<f\right>_{r, Q}\left<g\right>_{s', Q}.
\end{align*}
Note that this gives geometrically decaying $(r, s')$ sparse bounds for sufficiently small choice of $\epsilon>0$ whenever we have
$$
m<-n(1-\rho)(1/r-1/s), \qquad 1\le r\le 2 \le s \le r'.
$$

By duality and the fact that symbol classes are closed under adjoints, $(r, s')$ sparse bounds also imply $(s, r')$ sparse bounds, completing the proof of Theorem \ref{mainps} at the expense of establishing the corresponding sparse bounds for the term $\sum_{l > j \epsilon} T_a^{j,l}$ (see Figure \ref{f:2}).

\begin{figure}[H]
\begin{tikzpicture}[scale=3] 
\draw[thick,->] (-.2,0) -- (1.2,0) node[below] {$ \frac 1 r$};
\draw[thick,->] (0,-.2) -- (0,1.2) node[left] {$ \frac{1}{s'}$};

\draw (.5,.05) -- (.5,-.05) node[below] {$ \tfrac 12$};
\draw (.05,.5) -- (-.05,.5) node[left] {$ \tfrac 12$};

\draw (1,.05) -- (1,-.05) node[below] {$ 1$};
\draw (.05,1) -- (-.05,1) node[left] {$ 1$};


\draw[thick] (0,1) -- (1.,1.)  -- (1.,0); 
\draw[thick] (0,1) -- (0.5,0.5)  -- (1.,0); 
\draw[thick]  (0.5,0.5)  -- (1.,1);

\draw[thick]  (0.5,0.5)  -- (1.,.5);
\draw[thick]  (0.5,0.5)  -- (.5,1);

\node at (3/8,7/8) {$A'$};
\node at (5.4/8,7/8) {$B'$};
\node at (7/8,5.4/8) {$B$};
\node at (7/8,3/8) {$A$};

\end{tikzpicture}

\caption{The sparse bounds in the region $A$ corresponds to those under the constraint $1 \leq r \leq s \leq 2$. The ones in the region $B$ are those for which $1 \leq r \leq 2 \leq s \leq r' \leq \infty$. Those in $A'$ and $B'$ are obtained by duality.} 
\label{f:2}
\end{figure}
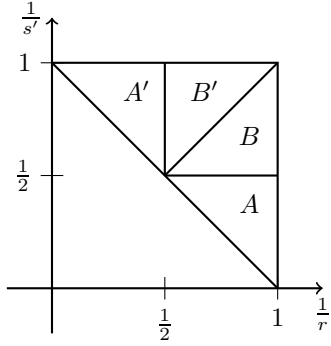

\subsection*{Dealing with the tails}

We only concern ourselves with the case $1 \leq r \leq s \leq 2$; the case $1 \leq r \leq 2 \leq s \leq r'$ follows in an analogous manner. 

For the terms $T_a^{j,l}$, we have
\begin{align*}
\Big< \sum_{j \geq 0} \sum_{l > j\epsilon} T_a^{j,l}f, g \Big> & \lesssim \sum_{j \geq 0} \sum_{l > j\epsilon} \sum_{\substack{Q \text{ dyadic}: \\ \ell(Q)=2^{\lfloor -j\rho+l+10 \rfloor}}} \|T_a^{j,l} f\|_{L^s(Q)} \|g\|_{L^{s'}(Q)} \\ 
& \lesssim_{\epsilon} \sum_{j \geq 0} \sum_{l > j\epsilon} \sum_{\substack{Q \text{ dyadic}: \\ \ell(Q)=2^{\lfloor -j\rho+l+10 \rfloor}}} \!\!\!\! |Q|^{1/r+1/s'-1} \|T_a^{j,l}\|_{L^r \to L^s} |Q| \left< f \right>_{r,Q} \left< g \right>_{s',Q} \\
& \lesssim_{\epsilon}  \sum_{j \geq 0} 2^{(-j\rho)n(1/r-1/s)  -jn(1/s-1/r) +j10n(m-n)} \sum_{l > j\epsilon} 2^{ln(1/r-1/s) +10n(m-n)l} \\
 & \qquad \qquad \qquad \qquad \times \sum_{\substack{Q \text{ dyadic}: \\ \ell(Q)=2^{\lfloor -j\rho+l+10 \rfloor}}} |Q| \left< f \right>_{r,Q} \left< g \right>_{s',Q}
\end{align*}
The sum in $j$ is finite if
$$
m< n - (1-\rho)(1/r-1/s)/10,
$$
which trivially holds as $\rho \in [0,1]$, $0 \leq 1/r-1/s \leq 1$ and $m<0$. Similarly, the sum in $l$ is finite if
$$
10(m-n)<-(1/r-1/s),
$$
which holds from the same considerations as above.

\subsection*{Obtaining true sparse bounds} Observe that for each $j$ and $l$, the partition of $\R^n$ into dyadic cubes $Q$ such that $\ell(Q)=2^{\lfloor{-j\rho+l+10}\rfloor}$ is a sparse family of dyadic cubes, by disjointness of the cubes. The geometric decay in both $j$ and $l$ allows then to recover a true sparse bound, from the following fact about sparse bounds, which tells us that there exists a universal sparse form that controls them all. This type of result has appeared several times in the literature, and we refer the reader to, among others, the work of Lacey and Mena in \cite{LM}, Lemma $4.7$ and the work of Conde-Alonso and Rey in \cite{CAR2014}, Proposition $2.1$. 

\begin{lemma} \normalfont{(}\cite[Lemma 4.7]{LM}\normalfont{).}
Given $f, g$, there is a sparse family of dyadic cubes $\CS^0$ and a constant $C>1$ so that for any other bilinear sparse form $\Lambda_{r,s,\CS}$, we have
$$
\Lambda_{r,s,\CS}(f,g) \leq C \Lambda_{r,s,\CS^0}(f,g).
$$
\end{lemma}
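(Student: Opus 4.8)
The plan is to sandwich every sparse form between two multiples of a single intermediate quantity built from maximal functions, the second multiple being itself a sparse form over one universal collection. First I would record the trivial upper bound: for any $\eta$-sparse collection $\CS$ (with $\eta$ bounded below by a fixed constant --- which is the case for all the collections produced in Theorem~\ref{mainps}, since those are partitions, so that $E_Q=Q$ there) and every $x\in E_Q$ one has $\langle f\rangle_{r,Q}\le M_rf(x)$ and $\langle g\rangle_{s,Q}\le M_sg(x)$, where $M_ph:=(M(|h|^p))^{1/p}$ and $M$ is the maximal function over the cubes of our grid; since the $E_Q$ are pairwise disjoint, summing $|Q|\langle f\rangle_{r,Q}\langle g\rangle_{s,Q}\le\eta^{-1}\int_{E_Q}M_rf\cdot M_sg$ gives $\Lambda_{r,s,\CS}(f,g)\le\eta^{-1}\int M_rf\cdot M_sg$. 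This uses nothing about $\CS$ beyond sparseness, so it controls \emph{all} sparse forms simultaneously, and the whole task reduces to producing one sparse family $\CS^0$, depending only on $f,g,r,s$, with $\int M_rf\cdot M_sg\lesssim\Lambda_{r,s,\CS^0}(f,g)$.

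Before constructing $\CS^0$ I would make the standard reductions: pass to one of the $3^n$ shifted dyadic grids, and, using that $f,g$ are compactly supported, replace $M_r,M_s$ by the maximal functions over subcubes of a fixed large dyadic cube $Q_0\supseteq\supp f\cup\supp g$, so that the integrals above are finite and concentrated in $Q_0$ (the boundedly many larger cubes that can occur being treated directly). Then I would let $\CS^0$ be the Calderón--Zygmund stopping family adapted to $f$ and $g$: put $Q_0\in\CS^0$ and, given $Q\in\CS^0$, let its children be the maximal dyadic $Q'\subsetneq Q$ with $\langle f\rangle_{r,Q'}>A\langle f\rangle_{r,Q}$ or $\langle g\rangle_{s,Q'}>A\langle g\rangle_{s,Q}$, where $A$ is an absolute constant chosen so large (using the weak-$(1,1)$ bound for the dyadic maximal function applied to $|f|^r$ and $|g|^s$) that the children pack into at most $\tfrac12|Q|$; this makes $\CS^0$ a $\tfrac12$-sparse family with $E_Q=Q\setminus\bigcup(\text{children of }Q)$.

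The point of the stopping construction is the following consequence of maximality: if $x\in E_Q$ and $P\ni x$ is a dyadic subcube of $Q$, then $P$ lies in no child of $Q$, hence $\langle f\rangle_{r,P}\le A\langle f\rangle_{r,Q}$ and $\langle g\rangle_{s,P}\le A\langle g\rangle_{s,Q}$; propagating this up the chain of $\CS^0$-ancestors of $Q$ gives, for $x\in E_Q$,
\[
M_rf(x)\le A\,A_Q,\qquad M_sg(x)\le A\,B_Q,\qquad A_Q:=\max_{P\in\CS^0,\,P\supseteq Q}\langle f\rangle_{r,P},
\]
with $B_Q$ defined analogously. Summing over the disjoint $E_Q$ yields $\int M_rf\cdot M_sg\le A^2\sum_{Q\in\CS^0}|Q|\,A_QB_Q$, so it remains to absorb the right side into $\Lambda_{r,s,\CS^0}(f,g)=\sum_{Q\in\CS^0}|Q|\langle f\rangle_{r,Q}\langle g\rangle_{s,Q}$.

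This last step is the one delicate point, and I expect it to be the main obstacle: along a stopping chain the $f$-averages and the $g$-averages oscillate --- at each stopping step only one of them is forced to jump --- so one cannot telescope naively. My plan here is to write $A_Q=\langle f\rangle_{r,\alpha(Q)}$ and $B_Q=\langle g\rangle_{s,\beta(Q)}$ with $\alpha(Q),\beta(Q)\in\CS^0$ ancestors of $Q$ (necessarily nested), split $\sum_Q|Q|A_QB_Q$ according to which of $\alpha(Q),\beta(Q)$ is the smaller, group the resulting sum by that inner ancestor, and apply twice two standard facts about sparse families: the Carleson packing $\sum_{R\in\CS^0,\,R\subseteq S}|R|\le 2|S|$, and the sparse Carleson embedding $\sum_{R\in\CS^0,\,R\subseteq S}|R|\langle g\rangle_{s,R}\lesssim_{s,n}|S|\langle g\rangle_{s,S}$ (which follows from $\langle g\rangle_{s,R}\le M_s(g\chi_S)$ on $E_R$ together with the weak-type bound for $M_s$ on the finite-measure set $S$). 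Each grouping trades a ``max over ancestors'' factor for the average at the grouping cube, and after two rounds the double sum collapses to $\lesssim_{r,s,n}\Lambda_{r,s,\CS^0}(f,g)$. Combining this with the previous two bounds gives $\Lambda_{r,s,\CS}(f,g)\le C\,\Lambda_{r,s,\CS^0}(f,g)$ with $C$ depending only on $r,s,n$ and the sparseness constant of $\CS$, which is the assertion. (This is essentially the argument of \cite{LM,CAR2014}.)
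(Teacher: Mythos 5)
The paper does not actually prove this lemma; it states it as a known fact from \cite{LM} and \cite{CAR2014} and then gives, ``for completeness,'' a simpler \emph{direct} argument that bypasses the general statement by exploiting the geometric decay available in the application. That argument is the ``checkerboard selection'': after splitting the $j$-sum into arithmetic progressions so that distinct $j$ correspond to distinct sidelengths, for each $j$ it keeps exactly one cube of the $j$-th sidelength inside every dyadic ancestor $j$ levels up (the one maximizing $\langle f\rangle_{r,Q'}\langle g\rangle_{s',Q'}$), absorbs the factor $3^{-nj}$ into the overcounting, and verifies the Carleson property of the resulting family directly. Your attempt, by contrast, is a proof of the general lemma itself via maximal functions and a stopping-time construction, so the routes are genuinely different.

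Your opening reduction is correct: disjointness of the sets $E_Q$ gives $\Lambda_{r,s,\CS}(f,g)\le\eta^{-1}\int (M|f|^r)^{1/r}(M|g|^s)^{1/s}$ for every $\eta$-sparse $\CS$, and the stopping construction of $\CS^0$ is standard. The gap is precisely where you flag it. To collapse $\sum_Q|Q|A_QB_Q$ into $\Lambda_{r,s,\CS^0}(f,g)$ you invoke the ``sparse Carleson embedding'' $\sum_{R\in\CS^0,\,R\subseteq S}|R|\langle g\rangle_{s,R}\lesssim|S|\langle g\rangle_{s,S}$, justified via the weak-$(s,s)$ maximal bound on a finite-measure set (Kolmogorov). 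That step requires $s>1$; for $s=1$ the embedding is \emph{false}. In dimension one take $\CS^0\supseteq\{[0,2^{-k}]:0\le k\le K\}$ (which is $\tfrac12$-sparse) and $g=\chi_{[0,2^{-K}]}$: the left side equals $(K+1)2^{-K}$ while $|[0,1]|\,\langle g\rangle_{1,[0,1]}=2^{-K}$. By the symmetric application to $f$, your scheme needs $r>1$ as well. But the lemma is invoked in the paper with one exponent equal to $1$ (e.g.\ the $(r,1)$ sparse forms at the vertex $v_2$, or $(1,s')$ at $v_3$), so as sketched the argument does not cover the stated range.

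The standard proof of the lemma avoids both the maximal-function detour and the embedding, and works for all $r,s\ge1$: take $\CS^0$ to be the stopping family (children of $Q$ are the maximal dyadic $Q'\subsetneq Q$ with $\langle f\rangle_{r,Q'}>A\langle f\rangle_{r,Q}$ or $\langle g\rangle_{s,Q'}>A\langle g\rangle_{s,Q}$, with $A$ large enough that they pack into $\tfrac12|Q|$). For every dyadic $Q\subseteq Q_0$, let $\pi(Q)\in\CS^0$ be the smallest member of $\CS^0$ containing $Q$; since $Q$ is not contained in any stopping child of $\pi(Q)$, maximality gives $\langle f\rangle_{r,Q}\le A\langle f\rangle_{r,\pi(Q)}$ and $\langle g\rangle_{s,Q}\le A\langle g\rangle_{s,\pi(Q)}$. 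Hence for any $\eta$-sparse $\CS$,
\begin{equation*}
\Lambda_{r,s,\CS}(f,g)\le A^2\sum_{P\in\CS^0}\langle f\rangle_{r,P}\langle g\rangle_{s,P}\sum_{\substack{Q\in\CS\\ \pi(Q)=P}}|Q| \le A^2\eta^{-1}\,\Lambda_{r,s,\CS^0}(f,g),
\end{equation*}
the last step using only the Carleson packing $\sum_{Q\in\CS,\,Q\subseteq P}|Q|\le\eta^{-1}|P|$ coming from disjointness of the $E_Q$. This is essentially the argument of the cited references; no exponent restriction appears. I would recommend replacing the $\int (M|f|^r)^{1/r}(M|g|^s)^{1/s}$ reduction with this ``stopping parent'' map.
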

For completeness, we find instructive to provide the reader with all the details, as in the case one has a geometrically decaying series of sparse bounds the obtention of the genuine sparse bounds may be done through the following simple selection process. Let $C>1$ denote the geometrically decaying constant in our sparse bounds, 
so that we have
\begin{equation}\label{firstgeneral}
\left<T_af, g\right>\lesssim\sum_{j\ge 0}C^{-j}\sum_{Q\text{ dyadic}:\,\ell(Q)= 2^{\lfloor{-j\rho+j\epsilon+10}\rfloor}}|Q|\left<f\right>_{r, Q}\left<g\right>_{s', Q}.
\end{equation}
Since a finite union of sparse collections is sparse, it suffices to prove sparse domination for
\begin{align}\label{suff}
\sum_{j\ge 0:\, j\equiv a(\text{mod }N)}C^{-j}\sum_{Q\text{ dyadic}:\,\ell(Q)= 2^{\lfloor{-j\rho+j\epsilon+10}\rfloor}}|Q|\left<f\right>_{r, Q}\left<g\right>_{s', Q}
\end{align}
for some $N$, and for all $0\le a\le N-1$. Choose $N$ sufficiently large so that $C^{N}>4^n$ and $N>100/\rho$ if $\rho\ne 0$, so that different $j$ in \eqref{suff} correspond to different dyadic levels of cubes; this is in contrast with the situation in \eqref{firstgeneral}.  Fix some such $a$. 
After a re-indexing, this large choice of $N$ allows us to rewrite (\ref{suff}) as
\begin{align*}
\sum_{j\ge 0}3^{-nj}\sum_{Q\text{ dyadic}:\,\ell(Q)=2^{-m(j)+j_0}}|Q|\left<f\right>_{r, Q}\left<g\right>_{s', Q},
\end{align*}
where $m(j)$ is a strictly increasing sequence of integers such that $m(0)=0$.
Define our sparse collection $\mathcal{S}$ as follows. For each $j$, choose $\mathcal{S}_j$ to be a certain \textit{checkerboard} collection of dyadic cubes of sidelength $2^{-m(j)+j_0}$: for each dyadic cube $Q$ of sidelength $2^{-m(j)+j_0+j}$, choose a single dyadic cube $Q'$ of sidelength $2^{-m(j)+j_0}$ contained in $Q$ such that $\left<f\right>_{r, Q'}\left<g\right>_{s', Q'}$ is maximized. Then if we set $\mathcal{S}=\bigsqcup_j\mathcal{S}_j$, clearly
\begin{multline*}
\sum_{j\ge 0}3^{-nj}\sum_{Q\text{ dyadic}:\,\ell(Q)=2^{-m(j)+j_0}}|Q|\left<f\right>_{r, Q}\left<g\right>_{s', Q}
\\
\lesssim\sum_{j\ge 0}\sum_{Q\in \mathcal{S}_j}|Q|\left<f\right>_{r, Q}\left<g\right>_{s', Q}\lesssim\sum_{Q\in\mathcal{S}}|Q|\left<f\right>_{r, Q}\left<g\right>_{s', Q}.
\end{multline*}
Note that $\mathcal{S}$ is clearly a Carleson collection of cubes, i.e. it is a collection of dyadic cubes such that the total measure of cubes of a given sidelength decays geometrically with the sidelength,
$$
\sum_{Q \in \CS} |Q| = \sum_{j \geq 0} \sum_{Q \in \mathcal{S}_j} |Q| = \sum_{j \geq 0} |Q_0| 2^{n [m(j) - j_0 - j]} 2^{-n[m(j) - j_0]} = \sum_{j \geq 0} 2^{-jn} |Q_0| \leq |Q_0|,
$$
where $Q_0$ is the cube containing the supports of $f$ and $g$. Therefore $\CS$ is sparse, as it is well known that the Carleson and sparse conditions are equivalent; see \cite{LN}.

\section{Sharpness of the sparse bounds}\label{sec:endpoints}
Suppose towards a contradiction that a sparse bound holds outside the closure of the range given by Theorem \ref{mainps}. That is, suppose that for any symbol $a\in S^{m}_{\rho, \delta}$ there are some $r$ and $s$ such that 
\begin{align}\label{hyp}
m>-n(1-\rho)(1/r-1/2),\qquad 1\le r\le s\le 2
\end{align}
or
\begin{align*}
m>-n(1-\rho)(1/r-1/s),\qquad 1\le r\le 2\le s\le r'
\end{align*}
and for which a sparse $(r, s')$ bound holds; the case for which a sparse $(s', r)$ bound holds follows from duality. In particular, we may take this symbol $a\in S^m_{\rho, \delta}$ to be $a(x,\xi)=e^{i|\xi|^{1-\rho}}(1+|\xi|)^m$ if  $1 \leq r \leq  s\le 2$ and $a(x,\xi)=(1+|\xi|^2)^{m/2}$ if $r\le 2\le s$, since these symbols satisfy the same $L^p$ improving boundedness properties as a general symbol for these ranges of $r$ and $s$.
\newline
\indent
Consider first the case of $1 \leq r\le s\le 2$ satisfying \eqref{hyp}. Set $m'=-j\rho n(1/r-1/s)$ and define the symbol $b(x, \xi)=a(x, \xi)(1+|\xi|^2)^{m'/2}$. Then $T_b$ is unbounded from $L^p$ to $L^q$ for all $(p, q)$ in a sufficiently small neighbourhood of $(r, s)$, so for sufficiently small $\epsilon'>0$, there are infinitely many $j$ such that  
$$\|T_a^j\|_{L^r\to L^s}\ge 2^{j[\rho n(\frac{1}{r}-\frac{1}{s})+\epsilon']}.$$ 
Let $\{j_k\}_{k=1}^{\infty}$ be a sequence of such $j$'s. But by Lemma \ref{qrlemma}, 
$$\Big\|\sum_{l>j\epsilon}T_a^{j, l}\Big\|_{L^r\to L^s}\lesssim 2^{j\rho n(\frac{1}{r}-\frac{1}{s})},$$
so reindexing the $j_k's$ to start at a sufficiently large value of $j$, we have for all $k$,
\begin{align*}
\Big\|\sum_{l\le j_k\epsilon}T_a^{j_k, l}\Big\|_{L^r\to L^s}\gtrsim 2^{j_k[\rho n(\frac{1}{r}-\frac{1}{s})+\epsilon']}.
\end{align*}
Recall that for $1 \leq r \leq  s\le 2$, we considered $a(x, \xi)=e^{i|\xi|^{1-\rho}}(1+|\xi|)^m$. Integrating by parts in the $\xi$ variable $N(\epsilon)$ times (in the ``reverse" manner to Lemma \ref{ttlemma}, that is applying $\xi$ derivatives to the exponential term $e^{i(x-y)\cdot\xi}$ rather than the symbol), we have
\begin{align*}
|T_a^{j, l}f(x)| & =\Big|\int e^{i|\xi|^{1-\rho}}(1+|\xi|)^m\widetilde{\psi}(2^{-j}\xi)\int f(y)[\wt{\psi}(2^{-l+j\rho}(x-y))]e^{i(x-y)\cdot\xi}\,dy\,d\xi\Big|
\notag \\
& \lesssim_{\epsilon} 2^{jn}(2^{j\rho}2^{l-j\rho})^{N(\epsilon)}\int|f(y)|\wt{\psi}(2^{-l+j\rho}(x-y))|\,dy.
\end{align*}
For $N(\epsilon)$ sufficiently large, 
$$\sum_{l<-j_k\epsilon}\big\|2^{jn}(2^{j\rho}2^{l-j\rho})^{N(\epsilon)}\wt{\psi}(2^{-l+j\rho}\cdot)\big\|_{L^{(1+1/s-1/r)^{-1}}}\lesssim 1,$$
so by Young's convolution inequality and the triangle inequality we have
\begin{align*}
\Big\|\sum_{-j_k\epsilon\le l\le j_k\epsilon}T_a^{j_k, l}\Big\|_{L^r\to L^s}\gtrsim  2^{j_k[\rho n(\frac{1}{r}-\frac{1}{s})+\epsilon']}.
\end{align*}
By pigeonholing, it follows that there is some $l=l(k)$ with $-j_k\epsilon\le l\le j_k\epsilon$ such that
\begin{align*}
\|T_a^{j_k, l(k)}\|_{L^r\to L^s}\gtrsim  2^{j_k[\rho n(\frac{1}{r}-\frac{1}{s})+\epsilon'/2]}.
\end{align*}
Moreover, by disjointness and translation invariance there is a $f_k$, with $\Norm{f_k}_{L^r}=1$, supported in the ball centered at the origin of radius $2^{-j_k\rho+l(k)-10n}$ and such that
\begin{align*}
\|T_a^{j_k, l(k)}f_k\|_{L^s}\gtrsim  2^{j_k[\rho n(\frac{1}{r}-\frac{1}{s})+\epsilon'/2]}.
\end{align*}
By the spatial localisation of the operator $T_a^{j_k, l(k)}$ and the support of $f_k$, the function $T_a^{j_k, l(k)}f_k$ is supported in a dyadic annulus at scale $2^{-j_k \rho + l(k)}$. Then, there exists a $g_k$ supported in that annulus, with $\| g_k \|_{L^{s'}}=1$ and such that
\begin{align}\label{innerproduct}
\Big|\Big<T_a^{j_k, l(k)}f_k, g_k\Big>\Big|\gtrsim 2^{j_k[\rho n(\frac{1}{r}-\frac{1}{s})+\epsilon'/2]}.
\end{align}
Observe that the supports of $f_k$ and $g_k$ are disjoint, and in particular
\begin{align}\label{dist}
\text{dist}(\text{supp}(f_k), \text{supp}(g_k))\approx 2^{-j_k\rho+l(k)}.
\end{align}
Now, for each $k$, $\sum_{l}T_{a}^{j_k, l}$ is a symbol in $S^m_{\rho, \delta}$, so by hypothesis there is a positive sparse form $\Lambda_{\mathcal{S}, r, s', k}(f, g)=\sum_{Q\in\mathcal{S}}|Q|\left<f\right>_r\left<g\right>_{s'}$ such that
\begin{align*}
\Lambda_{\mathcal{S}, r, s', k}(f_k, g_k)\gtrsim \Big|\Big<\sum_{l}T_{a}^{j_k, l}f_k, g_k\Big>\Big|.
\end{align*}
Then, by \eqref{innerproduct} and support considerations
\begin{align*}
\Lambda_{\mathcal{S}, r, s', k}(f_k, g_k) & \gtrsim \Big|\Big<\sum_{l}T_{a}^{j_k, l}f_k, g_k\Big>\Big|
\\
& =\Big|\Big<T_a^{j_k, l(k)}f_k, g_k\Big>\Big|\gtrsim  2^{j_k[\rho n(\frac{1}{r}-\frac{1}{s})+\epsilon'/2]}.
\end{align*}
On the other hand, a cube contributing effectively to the sparse form $\Lambda_{\mathcal{S}, r, s', k}$ must contain the supports of both $f_k$ and $g_k$. By \eqref{dist}, the largest possible value for the sparse form will be given by a sparse family consisting of a single cube $Q$ with smallest possible sidelength satisfying such a property; note that for $r \leq s$ the factor $|Q|^{1-1/r-1/s'}$ decreases with the sidelength of $Q$. Then 
\begin{align*}
\Lambda_{\mathcal{S}, r, s', k}(f_k, g_k)\lesssim 2^{-j_k(\rho+\epsilon)n(1/s-1/r)},
\end{align*}
and so for an appropriate choice of $\epsilon, \epsilon'$, we have
\begin{align*}
-n\rho(1/s-1/r)< -n\rho(1/s-1/r),
\end{align*}
a contradiction.
\newline
\indent
To handle the case when $1\leq r\le 2\le s \leq r'$, the argument is similar. The only difference is that since the $L^p$-improving estimates are $\rho$-independent, we treat the symbol $(1+|\xi|^2)^{m/2}$ as belonging to the best possible symbol class that is a member of, namely $S^m_{1, \delta}$, and physically localised to scale $2^{-j}$. We leave the verification of the details to the interested reader.
\subsection*{Sharpness along the line $v_1-v_4$}

We remark that a quick inspection allows one to identify certain pairs $(r,s')$ for which sparse bounds cannot hold, as otherwise they would imply consequences which are known to be false. In particular, if $-n(1-\rho)/2 < m <0$, the pairs $(r,s')$ for which $(1/r, 1/s')$ lie outside the equilateral triangle formed by $v_1$, $v_4$ and $v_5$ in the sparse diagram (see Figure \ref{sparse region}) are not admissible; here $v_5:=(\frac{1}{2} + \frac{-m}{n(1-\rho)}, \frac{1}{2} + \frac{-m}{n(1-\rho)})$. Indeed, by duality, it is enough to show that no sparse bounds may hold if $(1/r,1/s')$ lie inside the triangle joining $(1/2,1/2), (1,1)$ and $(1,0)$ but outside the triangle joining $(1/2,1/2)$, $v_1$ and $v_5$.

Suppose towards a contradiction that given any $a \in S^m_{\rho,\delta}$ there exists a $(r,s')$ sparse bound for $T_a$ with
\begin{equation}\label{value of r contradiction}
\frac{1}{r} > \frac{1}{2} + \frac{-m}{n(1-\rho)}
\end{equation}
and some $s\geq r$. By Lemma \ref{WeightedSparse}, this implies that $T_a$ is bounded on $L^p$ for $r < p < s$.  Rearranging \eqref{value of r contradiction} one has
$$
r < \frac{2n(1-\rho)}{n(1-\rho)-2m},
$$
so there exists $\varepsilon>0$ such that $T_a$ is bounded on $L^{p(\varepsilon)}$, with $p(\varepsilon)=\frac{2n(1-\rho)}{n(1-\rho)-2m} - \varepsilon$. But this is a contradiction, as it is shown in \cite{Fe73} that there are symbols in $S^m_{\rho,\delta}$ which fail to be bounded on $L^p$ for $p<\frac{2n(1-\rho)}{n(1-\rho)-2m}$.

\section{Consequences of sparse domination}\label{sec:weighted}

One of the main advantages of sparse domination is that it yields weighted inequalities as a corollary. To obtain weighted estimates for sparse forms has proven to be much simpler than for many other operators, thanks to the fact that they are positive, localised and well behaved objects. We discuss some of them in the upcoming subsections.

\subsection{One-weight estimates}

Let $w$ denote a weight, that is, a nonnegative locally integrable function. Recall that given $1<p<\infty$, the Muckenhoupt classes of weights $A_p$ and the reverse Hölder classes of weights $RH_p$ consist of all $w$ satisfying
$$
[w]_{A_p}:=\sup_{B} \: \langle w \rangle_B \: \langle w^{1-p'} \rangle^{p-1}_B < \infty, \quad \quad [w]_{RH_p}:=\sup_{B} \: \langle w \rangle_B^{-1} \: \langle w \rangle_{B,p} < \infty
$$
respectively, where the supremum ranges over all balls $B$ in $\R^n$. The class $A_1$ consists of all $w$ satisfying $Mw \leq C w$ for some finite constant $C>0$; here $M$ denotes the Hardy--Littlewood maximal function. The infimum of such admissible constants $C$ is denoted by $[w]_{A_1}$.

Quantitative weighted estimates in the context of the above weights may be deduced for operators dominated by bilinear sparse forms as a consequence of the following lemma.
\begin{lemma}[\cite{BFP}]\label{WeightedSparse}
Suppose that we are given $r, s\in [1, \infty]$ and a sparse collection $\mathcal{S}$ of dyadic cubes. Then for any $r<p<s$ there exists a constant $C=C(r, s, p)$ so that for every weight $w\in A_{p/r}\cap RH_{(s/p)'}$,
\begin{equation*}
\Lambda_{\mathcal{S}, r, s'}(f, g)\le C\big([w]_{A_{p/r}}[w]_{RH_{(s/p)'}}\big)^{\alpha}\|f\|_{L^p(w)}\|g\|_{L^{p'}(w^{1-p'})},
\end{equation*}
where $\alpha:=\max\Big(\frac{1}{p-r}, \frac{s-1}{s-p}\Big).$
\end{lemma}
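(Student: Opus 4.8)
The plan is to run the by-now standard reduction from sparse forms to weighted estimates, essentially as in \cite{BFP}. Write $\sigma:=w^{1-p'}$ for the dual weight (so that $w^{1/p}\sigma^{1/p'}\equiv 1$ and $\|g\|_{L^{p'}(w^{1-p'})}=\|g\|_{L^{p'}(\sigma)}$), and abbreviate $q:=p/r$ and $\beta:=(s/p)'$. The strategy is to dominate the sparse form by the integral of a product of two dyadic maximal operators applied to $f$ and $g$, then to feed this into sharp weighted maximal inequalities: the weight $w$ is handled by the hypothesis $w\in A_{q}$, while the dual weight $\sigma$ is handled by a membership $\sigma\in A_{p'/s'}$ that is \emph{forced} by $w\in A_{q}\cap RH_{\beta}$.

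First I would carry out the domination. For $1\le t<\infty$ let $M^{\mathcal D}_t h:=\big(M^{\mathcal D}|h|^{t}\big)^{1/t}$ denote the $t$-th order dyadic maximal function over a grid $\mathcal D$ containing $\mathcal S$. Using the sparseness to select disjoint sets $E_Q\subseteq Q$ with $|E_Q|>\eta|Q|$, and observing that $\langle f\rangle_{r,Q}\le M^{\mathcal D}_rf(x)$ and $\langle g\rangle_{s',Q}\le M^{\mathcal D}_{s'}g(x)$ for every $x\in E_Q$, one obtains
\[
\Lambda_{\mathcal S,r,s'}(f,g)\le\eta^{-1}\sum_{Q\in\mathcal S}|E_Q|\,\langle f\rangle_{r,Q}\langle g\rangle_{s',Q}\le\eta^{-1}\int_{\R^n}M^{\mathcal D}_rf\cdot M^{\mathcal D}_{s'}g\,dx.
\]
Inserting $1=w^{1/p}\sigma^{1/p'}$ and applying H\"older's inequality with exponents $(p,p')$ gives
\[
\Lambda_{\mathcal S,r,s'}(f,g)\le\eta^{-1}\,\|M^{\mathcal D}_rf\|_{L^p(w)}\,\|M^{\mathcal D}_{s'}g\|_{L^{p'}(\sigma)}.
\]

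Next I would estimate the two norms. Since $r<p$, the operator $M^{\mathcal D}_r$ is bounded on $L^p(w)$ exactly when $w\in A_{q}$, and the sharp (Buckley-type) bound for $M^{\mathcal D}$ on $L^{q}(w)$ yields $\|M^{\mathcal D}_rf\|_{L^p(w)}\lesssim[w]_{A_{q}}^{1/(p-r)}\|f\|_{L^p(w)}$. For the second factor one uses the elementary fact that $w\in A_{q}\cap RH_{\beta}$ forces $\sigma\in A_{p'/s'}$ with $[\sigma]_{A_{p'/s'}}\lesssim\big([w]_{A_{q}}[w]_{RH_{\beta}}\big)^{p'-1}$: computing $\sigma^{1-(p'/s')'}=w^{\beta}$, one controls $\langle w^{\beta}\rangle_Q$ through the reverse H\"older condition by a power of $[w]_{RH_{\beta}}\langle w\rangle_Q$, and the leftover factor $\langle\sigma\rangle_Q\langle w\rangle_Q^{p'-1}\le[w]_{A_p}^{p'-1}\le[w]_{A_{q}}^{p'-1}$ (using $A_{q}\subseteq A_p$). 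Since $s>p$, $M^{\mathcal D}_{s'}$ is bounded on $L^{p'}(\sigma)$, and Buckley's bound on $L^{p'/s'}(\sigma)$ together with the identity $\frac{p'-1}{p'-s'}=\frac{s-1}{s-p}$ gives $\|M^{\mathcal D}_{s'}g\|_{L^{p'}(\sigma)}\lesssim[\sigma]_{A_{p'/s'}}^{1/(p'-s')}\|g\|_{L^{p'}(\sigma)}\lesssim\big([w]_{A_{q}}[w]_{RH_{\beta}}\big)^{(s-1)/(s-p)}\|g\|_{L^{p'}(\sigma)}$. Combining the three displays produces a bound of the asserted form. (The endpoints $r=1$ and $s=\infty$ only require the obvious reading: $A_1$ replaces $A_{p/r}$, and $RH_1=$ all weights, so the factor $[w]_{RH_{(s/p)'}}$ disappears.)

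The delicate point — indeed the only step that is not routine H\"older bookkeeping — is that the crude combination just described yields the exponent $\frac{1}{p-r}+\frac{s-1}{s-p}$ on (a power of) $[w]_{A_{p/r}}[w]_{RH_{(s/p)'}}$, whereas the statement demands the smaller $\alpha=\max\!\big(\frac{1}{p-r},\frac{s-1}{s-p}\big)$. The loss comes from splitting with H\"older and then applying the sharp maximal inequalities to the two factors independently, which double counts the ``$A_\infty$ part'' of the characteristics. To recover the sharp $\alpha$ one must treat the two factors jointly, as in \cite{BFP}: run the sharp weighted estimate for sparse operators (which carries $\max$-type exponents, after Lerner and Hyt\"onen) while separating the $A_{p/r}$ and $A_\infty$ contributions by means of the mixed $A_p$–$A_\infty$ inequalities for $M^{\mathcal D}$, so that the $A_\infty$ constants of $w$ and of $\sigma$ — each bounded, to fixed powers, by $[w]_{A_{p/r}}$ and $[w]_{RH_{(s/p)'}}$ respectively — are shared between the two sides rather than multiplied. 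I expect this joint processing to be the main obstacle; everything else is the elementary reduction above.
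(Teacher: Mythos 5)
The paper does not prove this lemma itself; it cites \cite{BFP}, so there is no in-house argument to compare against. Your reduction is the standard one and the intermediate computations are correct: the domination $\Lambda_{\mathcal S,r,s'}(f,g)\lesssim\int M^{\mathcal D}_r f\cdot M^{\mathcal D}_{s'}g\,dx$, the identity $\sigma^{1-(p'/s')'}=w^{(s/p)'}$, the transference $[\sigma]_{A_{p'/s'}}\lesssim\bigl([w]_{A_{p/r}}[w]_{RH_{(s/p)'}}\bigr)^{p'-1}$, and the exponent algebra $\frac{p'-1}{p'-s'}=\frac{s-1}{s-p}$ all check out. However, as you candidly flag, this route yields the exponent $\frac{1}{p-r}+\frac{s-1}{s-p}$ on $[w]_{A_{p/r}}$ (and only $\frac{s-1}{s-p}$ on $[w]_{RH_{(s/p)'}}$), not the asserted $\alpha=\max\bigl(\frac{1}{p-r},\frac{s-1}{s-p}\bigr)$ on the product. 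That is a genuine gap, not a stylistic one: the H\"older split followed by two independent Buckley bounds double-counts the $A_\infty$ content of $w$, and your final paragraph names the cure (joint processing via mixed $A_p$--$A_\infty$ estimates) without carrying it out.

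Simply replacing Buckley's bound with the mixed Hyt\"onen--P\'erez bound inside each of your two factors will not repair this on its own, since after the H\"older split the two maximal operators act on different measures and the $A_\infty$ savings do not automatically recombine into a $\max$. The argument in \cite{BFP} works on the sparse sum directly: one inserts $1=w^{1/p}\sigma^{1/p'}$ inside each localized average to obtain $\langle f\rangle_{r,Q}\le\langle|f|^pw\rangle_Q^{1/p}\langle w^{1-(p/r)'}\rangle_Q^{1/(r(p/r)')}$ and its dual analogue for $g$, and then estimates the resulting sum by a Carleson-embedding/stopping-time argument in which the $A_{p/r}$ and $RH_{(s/p)'}$ characteristics are paid once, with a case distinction deciding which of $\frac{1}{p-r}$, $\frac{s-1}{s-p}$ governs. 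If you want a self-contained proof of the stated exponent, this mechanism has to be spelled out; as written your argument establishes a qualitatively correct but quantitatively weaker estimate, which would in fact suffice for the weighted corollaries in Section~\ref{sec:weighted} (where only boundedness is used) but does not prove the lemma as stated.
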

This lemma is sharp at the level of sparse bounds and provides the best possible dependence on the $A_p$ characteristic of the weight. The intersection of the $A_p$ and the reverse Hölder classes may be understood as
\begin{equation}\label{equivalenceApRH}
w \in A_{p/r} \cap RH_{(s/p)'} \Longleftrightarrow w^{(s/p)'} \in A_{(s/p)'(p/r-1 )+1}.
\end{equation}
Of course Theorem \ref{mainps} and Lemma \ref{WeightedSparse} allows one to deduce weighted estimates for pseudodifferential operators associated to the symbol classes $S^m_{\rho,\delta}$. In views of the reverse Hölder properties of weights, one may effectively use a sparse bound at the closure of the trapezoid $\mathcal{T}_{m,\rho,n}$ in order to deduce weighted estimates. In particular, using the endpoint $v_3$, one has the following.

\begin{corollary}\label{weightedConsequences}
Let $a \in S^m_{\rho, \delta}$ for some $m<0$ and $0<\delta\le\rho<1$.

\begin{enumerate}[ \normalfont (i)]
\item Let $m=-n(1-\rho)$. Then $T_a$ is bounded on $L^p(w)$ for $w \in A_p$ and all $1<p<\infty$. \label{item1}
\item Let $-n(1-\rho)<m\leq -n(1-\rho)/2$. Then $T_a$ is bounded on $L^p(w)$ for $w \in A_{p/r_e}$ and all $r_e<p<\infty$, where $r_e:=-n(1-\rho)/m$. \label{item2}
\item Let $-n(1-\rho)/2 < m < 0$. Then $T_a$ is bounded on $L^p(w)$ for $w \in A_{p/2} \cap RH_{(s_e/p)'}$ and all $2<p<s_e$, where $s_e:=\frac{2n(1-\rho)}{n(1-\rho) + 2m}$. \label{item3}
\end{enumerate}
\end{corollary}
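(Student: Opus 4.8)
The plan is to combine the sparse domination of Theorem~\ref{mainps} with the weighted estimate of Lemma~\ref{WeightedSparse}. Recall that, assembling its two conclusions, Theorem~\ref{mainps} bounds $|\langle T_a f,g\rangle|$ by a sparse $(r,s')$ form $\Lambda_{\mathcal{S},r,s'}(f,g)$ for every pair $(r,s')$ with $(1/r,1/s')$ in the open trapezoid $\mathcal{T}_{m,\rho,n}$. The observation underlying all three items is that they correspond to choosing $(1/r,1/s')$ at the vertex $v_3$ and then invoking Lemma~\ref{WeightedSparse}: when $m=-n(1-\rho)$, $v_3=(1,1)$, so a $(1,1)$ sparse form is in play and Lemma~\ref{WeightedSparse} (with $r=1$, $s=\infty$) returns $L^p(w)$ boundedness for $w\in A_p$, $1<p<\infty$; when $-n(1-\rho)<m\le-n(1-\rho)/2$, $v_3=(1/r_e,1)$, so a $(r_e,1)$ sparse form is in play and Lemma~\ref{WeightedSparse} (with $r=r_e$, $s=\infty$, whence the reverse Hölder factor is trivial) returns boundedness for $w\in A_{p/r_e}$, $r_e<p<\infty$; and when $-n(1-\rho)/2<m<0$, $v_3=(1/2,1/s_e')$, so a $(2,s_e')$ sparse form is in play and Lemma~\ref{WeightedSparse} (with $r=2$, $s=s_e$) returns boundedness for $w\in A_{p/2}\cap RH_{(s_e/p)'}$, $2<p<s_e$. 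These are precisely the three assertions.

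The point requiring care is that $\mathcal{T}_{m,\rho,n}$ is \emph{open}, so $v_3$ is not itself admissible and must be reached by a limiting argument. Fixing a weight $w$ in the asserted class and an exponent $p$ in the asserted range, I would choose a pair $(r,s')$ with $(1/r,1/s')$ \emph{strictly inside} $\mathcal{T}_{m,\rho,n}$, just below $v_3$: in item (i) take $r$ slightly larger than $1$ and $s$ large but finite; in item (ii) take $r=r_e$ and $s$ large but finite; in item (iii) take $r=2$ and $s$ slightly smaller than $s_e$. For this pair Theorem~\ref{mainps} supplies an honest sparse bound $|\langle T_a f,g\rangle|\lesssim\Lambda_{\mathcal{S},r,s'}(f,g)$ — using the $\widetilde{\mathcal{S}}$ conclusion of the theorem, since in items (ii), (iii) the point sits above the diagonal $1/r=1/s'$ — and one then checks the hypotheses $r<p<s$ and $w\in A_{p/r}\cap RH_{(s/p)'}$ of Lemma~\ref{WeightedSparse}. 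The condition $r<p<s$ is immediate once $s$ is taken large enough (items (i)--(ii)) or close enough to $s_e$ (item (iii)); the membership $w\in A_{p/r}$ holds for free when $r\le r_\star$ (since then $A_{p/r}\supseteq A_{p/r_\star}$), and in item (i), where $r_\star=1$ forces $r>1$, it instead follows from the openness of $A_p$; finally, $w\in RH_{(s/p)'}$ holds because for $s$ merely large and finite the exponent $(s/p)'$ is close to $1$, so the condition is automatic for the $A_\infty$ weight $w$, while for $s$ close to $s_e$ one passes from $w\in RH_{(s_e/p)'}$ to $w\in RH_{(s/p)'}$ by the openness (self-improvement) of the reverse Hölder classes. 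Lemma~\ref{WeightedSparse} then gives $\|T_a f\|_{L^p(w)}\lesssim\|f\|_{L^p(w)}$ for compactly supported bounded $f$, hence for all $f\in L^p(w)$ by density. For item (i) one may alternatively argue directly from Theorem~\ref{pointwise}: since $a\in S^{-n(1-\rho)}_{\rho,\delta}$, it yields a pointwise $(r,1)$ sparse bound for every $r>1$, and choosing $r=p/(p-\varepsilon)$ with $\varepsilon<p-1$ small enough that $w\in A_{p-\varepsilon}$ lets Lemma~\ref{WeightedSparse} conclude at once.

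I expect the only genuine obstacle to be precisely this endpoint matter. Theorem~\ref{mainps} is an open-region statement, so the sharp weighted ranges in the corollary are obtained not from a single sparse inequality but from the limiting argument above, which rests on two standard facts: the Muckenhoupt classes $A_q$ are open, and $A_\infty$ weights satisfy a reverse Hölder inequality with some exponent strictly bigger than $1$. Once this is in place the three items are essentially identical, and the proof reduces to reading off the coordinates of $v_3$ in the trapezoid and verifying the elementary inequalities that place a small perturbation of $v_3$ inside $\mathcal{T}_{m,\rho,n}$.
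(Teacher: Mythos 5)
Your proof is correct and follows essentially the same route as the paper: reduce each item to a sparse bound at a point of the open trapezoid $\mathcal{T}_{m,\rho,n}$ chosen just inside the vertex $v_3$, then apply Lemma~\ref{WeightedSparse} after a self-improvement step for the weight class. The only cosmetic difference is in items (i)--(ii), where the paper keeps $s'=1$ (so the $RH$ factor in Lemma~\ref{WeightedSparse} is vacuous) and perturbs $r$ using the openness of $A_q$, whereas you keep $r=r_e$ (resp.\ $r$ slightly above $1$) and perturb $s'$ using the self-improvement of reverse H\"older classes --- both rest on the same underlying reverse H\"older property of $A_\infty$ weights.
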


This corollary recovers all the previous known weighted estimates for pseudodifferential operators in the context of Muckenhoupt weights mentioned in the Introduction, except for the cases $p=r_e$ in \eqref{item2} when $m=-n(1-\rho)/2$ and $p=2$ in \eqref{item3}; see \cite{MRScan} for \eqref{item1}, \cite{CT, MRS10} for \eqref{item2} and \cite{Bel2016} for \eqref{item3}. We believe that the estimates \eqref{item2} for $-n(1-\rho)<m<-n(1-\rho)/2$ are new. One should note that the estimates in Corollary \ref{weightedConsequences} only contain some of the possible weighted estimates; in particular other versions for \eqref{item2} and \eqref{item3} may be obtained by using other sparse bounds in the admissible region $\mathcal{T}_{m,\rho,n}$.

We also note that one could state a version of Corollary \ref{weightedConsequences} with an explicit quantitative dependence of the operator norm in terms of the $[w]_{A_p}$ and $[w]_{RH_p}$ constants. Because of the use of the reverse Hölder inequality in the proof of the corollary, the behaviour of the weighted constant would not just be that of Lemma \ref{WeightedSparse}, but would also depend on how the implicit constant in the sparse domination in Theorem \ref{mainps} depends on $r$ and $s$. The resulting quantitative weighted estimate would probably be far from being sharp and we do not concern with such finer points in this paper.

\begin{proof}[Proof of Corollary \ref{weightedConsequences}]
We shall only prove \eqref{item2} and \eqref{item3}, as \eqref{item1} is a particular case of \eqref{item2} when $m=-n(1-\rho)$. Let $a \in S^m_{\rho,\delta}$ with $-n(1-\rho) \leq m \leq -n(1-\rho)/2$, $w \in A_{p/r_e}$ and $r_e < p < \infty$. By the reverse Hölder property of weights, there exists $\varepsilon:=\varepsilon(n,p,[w]_{A_\infty})>0$ such that $w \in A_{p/r_e - \delta}$ for all $0 \leq \delta \leq \varepsilon$; see for instance \cite{HPR} for its sharp version. In particular, choose $\delta < \min \{\varepsilon, (p-r_e)/2r_e\}$ and set $r:=\frac{p}{p/r_e - \delta}$. Then $w \in A_{p/r}$ and $r_e < r < p $. As $(1/r, 1) \in \mathcal{T}_{m,\rho,n}$, $T_a$ is dominated by a $(r,1)$ sparse form by Theorem \ref{mainps}, and therefore bounded on $L^p(w)$ by Lemma \ref{WeightedSparse}. This establishes \eqref{item2}.

Similarly for a given $a \in S^m_{\rho,\delta}$, with $-n(1-\rho)/2<m<0$, let $w \in A_{p/2} \cap RH_{(s_{e}/p)'}$ and $r< p<s_{e}$. By the characterisation \eqref{equivalenceApRH}, $w^{(s_e/p)'} \in A_{(s/p)'(p/2-1)+1}$. Using the reverse Hölder property of weights \cite{HP}, there exists $\varepsilon:=\varepsilon(n,[w]_{A_\infty})>0$ such that
$$
\langle w^{(s_{e}/p)'}  \rangle_{1+\delta,Q} \leq 2 \langle w^{(s_{e}/p)'}  \rangle_{1,Q}
$$
for any cube $Q$ and $0 \leq \delta \leq \varepsilon$. Thus $$
\langle w \rangle_{(s_{e}/p)'(1+\delta),Q} \leq 2 \langle w \rangle_{(s_{e}/p)',Q} \lesssim \langle w \rangle_{1,Q},
$$
where the last inequality follows as $w \in RH_{(s_{e}/p)'}$. In particular, we have that $w \in RH_{(s_{e}/p)'(1+\delta)}$ for $0 \leq \delta \leq \varepsilon$. It is then a computation to find $p<s<s_e$ such that $w \in A_{p/2} \cap RH_{(s/p)'}$, with $(1/2,1/s')$ belonging to $\mathcal{T}_{m,\rho,n}$. It follows then from Theorem \ref{mainps} that $T_a$ is dominated by a $(2,s')$ sparse form, so by Lemma \ref{WeightedSparse} $T_a$ is bounded on $L^p(w)$, which concludes the proof of \eqref{item3}.
\end{proof}

\subsection{Fefferman--Stein inequalities}
It is also possible to deduce Fefferman--Stein weighted inequalities as a consequence of the sparse domination. That is, given $1 \leq p < \infty$, to identify an operator $\CM$ such that
\begin{equation}\label{FS}
\int_{\R^n} |Tf|^p w \leq C \int_{\R^n} |f|^p \CM w
\end{equation}
holds for any weight $w$, with constant independent of the weight. 
\newline
\indent
Lemma \ref{WeightedSparse} immediately yields this type of estimates for operators which are sparse dominated. This follows from the trivial observation that $(M_\gamma w)^{(s/p)'} \in A_1$ if $\gamma > (s/p)'$, with constant independent of $w$; here $M_\gamma w := (Mw^\gamma)^{1/\gamma}$. As $w \leq M_\gamma w$, an operator $T$ dominated by a $(r,s')$ sparse form satisfies then the inequality \eqref{FS} with $\CM=M_\gamma$ for $\gamma > (s/p)'$ and $r <p<s$.

In the case of $(r,1)$-sparse forms, where the condition on $\gamma$ becomes $\gamma>1$, the first author observed in \cite{Bel2015} that $M_\gamma$ may be improved to the controlling maximal function $\CM=M^{\lfloor p \rfloor +1}$; here $M^{\lfloor p \rfloor +1}$ denotes the $(\lfloor p \rfloor +1)$-fold composition of $M$ with itself. In this case, the argument is more intrinsic and does not follow from Lemma \ref{WeightedSparse}. The question whether $M_\gamma$ may be improved for general $(r,s)$ sparse forms remains open.

Of course the above discussion yields Fefferman--Stein inequalities for pseudodifferential operators through the sparse bounds in Theorem \ref{mainps}. We remark that the first author proved in \cite{Bel2016} sharp versions of these inequalities for $p=2$ and for any given symbol class $S^m_{\rho,\delta}$, with the controlling operator $\CM$ being a fractional maximal function associated to non-tangential approach regions. The proof of that result differs very much from the techniques presented in this paper. It would be interesting to explore a further connection between the two; in particular whether if sparse domination techniques could lead to Fefferman--Stein inequalities with operators $\CM$ falling beyond the classical $A_p$ theory, in a way that allows one to recover the results in \cite{Bel2016}.

\subsection{Coifman--Fefferman estimates}

It has recently been established in \cite{LPRR} that if an operator is dominated by a $(1,s)$ sparse form for any $s>1$, then a Coifman--Fefferman type estimate
$$
\|Tf\|_{L^p(w)} \lesssim  \|Mf\|_{L^p(w)}.
$$
holds for any $w \in A_\infty$ and $1 \leq p < \infty$, with implicit constant depending on $[w]_{A_\infty}$. This applies then to pseudodifferential operators associated to the symbol classes $S^{-n(1-\rho)}_{\rho,\delta}$. We believe this is the first time that these sorts of estimates are obtained for pseudodifferential operators.

\subsection{Weak-type estimates}

Theorem \ref{mainps} also allows to recover that the symbol classes $S^{m}_{\rho,\delta}$ with $m < -n(1-\rho)/2$ and $0 \leq \delta \leq \rho < 1$ are of weak-type $(1,1)$. This is a consequence of the following result for sparse forms. 

\begin{lemma}[\cite{CACDO}]
Suppose that a sublinear operator $T$ has the following property: there exists $C > 0$
and $1 \leq  r < \infty$ such that for every $f, g$ bounded with compact support there exists a sparse
collection $\CS$ such that
$$
| \left< T f, g \right>| \leq C \Lambda_{\CS,1,r}(f,g).
$$
Then $T : L^1(\R^n) \to L^{1,\infty}(\R^n)$ boundedly.
\end{lemma}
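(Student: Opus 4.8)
The plan is to run a Calderón--Zygmund argument in which the kernel cancellation usually exploited on the bad part is replaced by the structure of the sparse form together with a support restriction on a dual function. Fix $\lambda>0$; by density and a routine limiting argument it suffices to prove $\lambda\,|\{x\in\R^n:|Tf(x)|>\lambda\}|\lesssim\|f\|_1$ for bounded, compactly supported $f$. Run a Calderón--Zygmund decomposition of $f$ at height $\lambda$, so that $f=g+b$ with $b=\sum_i b_i$, the $b_i$ supported on pairwise disjoint dyadic cubes $Q_i$ with $\int b_i=0$ and $\|b_i\|_1\lesssim\lambda|Q_i|$, $\sum_i|Q_i|\lesssim\|f\|_1/\lambda$, while $\|g\|_\infty\lesssim\lambda$ and $\|g\|_1\lesssim\|f\|_1$. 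Put $\Omega^\ast:=\bigcup_i 3Q_i$, so $|\Omega^\ast|\lesssim\|f\|_1/\lambda$, and estimate $|\{|Tf|>\lambda\}|\le|\{|Tg|>\lambda/2\}|+|\{|Tb|>\lambda/2\}|$.

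For the good part, note first that the $(1,r)$ sparse domination, fed into Lemma~\ref{WeightedSparse} with the trivial weight $w\equiv1$, gives that $T$ is bounded on $L^p$ for every $p\in(1,r')$. Fixing such a $p$ and combining Chebyshev's inequality with $\|g\|_p^p\le\|g\|_\infty^{p-1}\|g\|_1\lesssim\lambda^{p-1}\|f\|_1$ yields $|\{|Tg|>\lambda/2\}|\lesssim\lambda^{-p}\|Tg\|_p^p\lesssim\lambda^{-p}\|g\|_p^p\lesssim\|f\|_1/\lambda$.

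For the bad part, since $|\Omega^\ast|\lesssim\|f\|_1/\lambda$ it is enough to bound $|\{x\notin\Omega^\ast:|Tb(x)|>\lambda/2\}|$; note $Tb\in L^p$ by the previous paragraph, so this set has finite measure. By duality it is, up to the factor $\lambda$, at most $\langle Tb,h\rangle$ for some $h$ with $\|h\|_\infty\le1$ supported in the complement of $\Omega^\ast$ (and, after a truncation yielding a bound uniform in the truncation, compactly supported). Apply the hypothesis to the pair $(b,h)$: there is a sparse $\CS$ with $\langle Tb,h\rangle\le C\sum_{P\in\CS}|P|\,\langle b\rangle_{1,P}\,\langle h\rangle_{r,P}$. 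The structural point is that any cube $P$ with $\langle h\rangle_{r,P}\ne0$ meets $\R^n\setminus\Omega^\ast$, hence is not contained in any $3Q_i$; after the $3^n$-grid reduction one may then assume that whenever such a $P$ meets a cube $Q_i$ it in fact contains it, so that $\int_P|b|=\sum_{i:\,Q_i\subseteq P}\|b_i\|_1\lesssim\lambda\sum_{i:\,Q_i\subseteq P}|Q_i|\le\lambda\,|\Omega^\ast\cap P|$, i.e. $\langle b\rangle_{1,P}\lesssim\lambda\,|\Omega^\ast\cap P|/|P|$. It then remains to show $\sum_{P\in\CS}|\Omega^\ast\cap P|\,\langle h\rangle_{r,P}\lesssim|\Omega^\ast|$, which I would obtain by organising $\CS$ by dyadic scale, using the disjointness of the sets $E_P$ from the definition of sparseness, bounding $\langle h\rangle_{r,P}\le1$ at scales below the measure of the level set and $\langle h\rangle_{r,P}\le\big(\,|\{|Tb|>\lambda/2\}|/|P|\,\big)^{1/r}$ at larger scales, and summing the resulting geometric series in the scale. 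Combined with the good-part bound this gives $\lambda\,|\{|Tf|>\lambda\}|\lesssim\|f\|_1$.

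I expect the main obstacle to be exactly this last summation on the bad part. With only a black-box $(1,r)$ sparse bound available there is no cancellation of $Tb_i$ away from $Q_i$ to exploit, so the Hörmander-kernel estimate of the classical proof must be replaced by the combination of the support restriction on $h$, the ``straddling'' geometry forcing contributing cubes to swallow whole Calderón--Zygmund cubes, and the Carleson/packing property of $\CS$; arranging this book-keeping so that it closes with the sharp power of $\lambda$ (rather than losing a logarithm) is delicate, and I anticipate needing to group the contributing cubes by a stopping-time argument anchored on the $Q_i$ rather than estimating each chain $\{P\in\CS:P\supseteq Q_i\}$ term by term.
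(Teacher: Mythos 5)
The paper cites this lemma from \cite{CACDO} without reproducing a proof, so there is no in-text argument to compare with; I assess your proposal on its own. Your set-up is sound: the Calderón--Zygmund decomposition, the $L^p$ bound for the good part via Lemma~\ref{WeightedSparse} with $w\equiv 1$, the dualisation $\lambda|E|\lesssim\langle Tb,h\rangle$ with $\|h\|_\infty\le1$, $\supp h\subseteq E\subseteq\R^n\setminus\Omega^\ast$, and --- crucially --- the observation that any $P\in\CS$ with $\langle h\rangle_{r,P}\neq 0$ reaches outside $\Omega^\ast$, hence by dyadic nesting strictly contains every $Q_i$ it meets, giving $\langle b\rangle_{1,P}\lesssim\lambda\,|\Omega\cap P|/|P|=\lambda\,\langle\chi_\Omega\rangle_{1,P}$ and therefore
$$\lambda|E|\ \lesssim\ \sum_{P\in\CS}|P|\langle b\rangle_{1,P}\langle h\rangle_{r,P}\ \lesssim\ \lambda\sum_{P\in\CS}|P|\langle\chi_\Omega\rangle_{1,P}\langle h\rangle_{r,P}.$$

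The gap is precisely where you flagged it, and your proposed scale-by-scale summation cannot be made to close. At scales with $\ell(P)^n\lesssim|E|$ the bound $\langle h\rangle_{r,P}\le1$ has no decay; there are infinitely many such scales, and each can contribute up to $|\Omega^\ast|$ since cubes of a fixed sidelength are disjoint. The disjointness of the $E_P$ does not rescue this either, since $\sum_P\chi_{E_P}\le1$ only yields $\int M_r h$, which diverges for every $r\ge1$. The missing idea is a bootstrap: the displayed sum is nothing but $\lambda\,\Lambda_{\CS,1,r}(\chi_\Omega,h)$, the \emph{given} sparse $(1,r)$ form applied to the new pair $(\chi_\Omega,h)$. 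Feeding it into the $L^p\times L^{p'}$ bound for sparse forms (Lemma~\ref{WeightedSparse} with $w\equiv1$, valid for $1<p<r'$) and using $\|h\|_{p'}\le|E|^{1/p'}$ gives $\lambda|E|\lesssim\lambda\,|\Omega|^{1/p}|E|^{1/p'}$; since $|E|<\infty$ (from the $L^p$ bound on $T$), dividing by $|E|^{1/p'}$ yields $|E|\lesssim|\Omega|\lesssim\|f\|_1/\lambda$ and closes the argument. This self-application of the sparse bound is what replaces the Hörmander kernel estimate of the classical proof, and it is the one substantive ingredient missing from your sketch; the stopping-time bookkeeping you anticipate is not needed.
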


\section{Oscillatory Fourier multipliers}\label{sec:multipliers}

Theorem \ref{mainps} and the techniques presented in Section \ref{sec:proof} naturally extend for certain classes of oscillatory multipliers. In this section, given $\alpha,\beta \in \R$, let $m$ denote a Fourier multiplier on $\mathbb{R}^n$, with support in the set $\{\xi \in \R^d : |\xi|^\alpha \geq 1\}$, and satisfying the Miyachi condition
\begin{equation}\label{MikhlinCondition}
|D^\gamma m(\xi)|\lesssim |\xi|^{-\beta+|\gamma|(\alpha-1)}
\end{equation}
for every multi-index $\gamma \in \N^n$.
\newline
\indent
Of course, when $0< \alpha \leq 1$, the differential inequalities \eqref{MikhlinCondition} are essentially equivalent to those defining the symbol classes $S^{-\beta}_{1-\alpha,0}$. The reason to have the support condition $\{|\xi|^\alpha \geq 1\}$ and the factor $|\xi|$ in \eqref{MikhlinCondition} instead of $(1+|\xi|)$ is for our treatment to hold also when $\alpha<0$. In that case, the relevant behaviour of the oscillatory multipliers is determined by low frequencies. Interestingly, when $\alpha \not \in [0,1]$, differentiating the multiplier implies a growth in its order; this is in contrast with the case $0 \leq \alpha < 1$, where differentiating implies decay. Model examples for the above Fourier multipliers are given by
$
m_{\alpha,\beta}(\xi):=|\xi|^{-\beta}e^{i|\xi|^\alpha}\chi_{\{|\xi|^\alpha \geq 1\}}(\xi)
$, first studied by Hirschman \cite{Hi59}, and later by Wainger \cite{Wainger}, Fefferman \cite{Fe70}, Fefferman and Stein \cite{FSmult}, Miyachi \cite{Mi80,Mi81} and others. They satisfy the following sparse bounds.

\begin{theorem}\label{MultipliersSparse}
Let $\alpha, \beta \in \R$, $\alpha \neq 0$. Let $m$ be a multiplier satisfying \eqref{MikhlinCondition}. Then for any compactly supported bounded functions $f, g$ on $\mathbb{R}^n$, there exists sparse collections $\mathcal{S}$ and $\widetilde{\mathcal{S}}$ of dyadic cubes such that
\begin{align*}
|\left<Tf, g\right>|\le C(\alpha,\beta, r, s)\Lambda_{\mathcal{S}, r, s'}(f, g)
\end{align*}
and
\begin{align*}
|\left<Tf, g\right>|\le C(\alpha,\beta, r, s)\Lambda_{\widetilde{\mathcal{S}}, s', r}(f, g)
\end{align*}
for all pairs $(r, s')$ and $(s', r)$ such that $\alpha \cdot  \beta>0$ and
\begin{align*}
|\beta|>n|\alpha|(1/r-1/2), \qquad 1\le r\le s \le 2
\end{align*}
or
\begin{align*}
|\beta|>n |\alpha| (1/r-1/s), \qquad 1\le r\le 2 \le s \le r'.
\end{align*}
\end{theorem}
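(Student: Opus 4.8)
The plan is to run the proof of Theorem \ref{mainps} from Section \ref{sec:proof} essentially verbatim, replacing the symbol-class inputs by their (simpler) convolution-operator analogues and taking care of the sign bookkeeping needed to accommodate all $\alpha\neq 0$. First I would decompose $T=\sum_j T^j$ into frequency localised pieces with multiplier $m(\xi)\wt\psi(2^{-j}\xi)$; since $\supp m\subseteq\{|\xi|^\alpha\geq 1\}$, this sum runs over $j\geq 0$ when $\alpha>0$ and over $j\leq 0$ when $\alpha<0$. As for the model multiplier $m_{\alpha,\beta}$, a stationary phase / integration by parts heuristic shows that $T^j$ is attached to the spatial scale $2^{j(\alpha-1)}$, which plays the role of $2^{-j\rho}$ in Section \ref{sec:proof} under the correspondence identifying the pair $(\rho,m)$ there with $(1-\alpha,-\beta)$ here (now allowing $\rho\leq 0$, which is harmless in the absence of $x$-dependence). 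I would then decompose $T^j=\sum_l T^{j,l}$ into operators localised at spatial scale $2^l2^{j(\alpha-1)}$ and split off the tail $|l|>|j|\epsilon$ for a small fixed $\epsilon>0$, exactly as in Section \ref{sec:proof}.

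Next I would establish the three single-scale building blocks replacing Lemmas \ref{ttlemma}, \ref{oilemma} and \ref{iilemma}, which are easier here because $T^j$ is a convolution operator. Plancherel and \eqref{MikhlinCondition} with $\gamma=0$ give $\|T^j\|_{L^2\to L^2}\lesssim 2^{-j\beta}$; writing $K_j$ for the convolution kernel of $T^j$, one has $\|T^j\|_{L^1\to L^\infty}\leq\|K_j\|_{L^\infty}\lesssim\int_{|\xi|\sim 2^j}|m|\lesssim 2^{jn-j\beta}$; and the $L^1\to L^1$ bound follows from the Fefferman kernel argument used in the proof of Lemma \ref{iilemma} --- split $\int|K_j(z)|\,dz$ over $|z|<t$ and $|z|>t$, apply Cauchy--Schwarz, Plancherel and \eqref{MikhlinCondition}, and optimise at $t=2^{j(\alpha-1)}$, obtaining $\|T^j\|_{L^1\to L^1}\lesssim 2^{-j\beta+jn\alpha/2}$ with no $\varepsilon$-loss. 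For the tail operators $T^{j,l}$ with $|l|>|j|\epsilon$, integrating by parts in $\xi$ a large number of times gains $(2^{j(\alpha-1)}/|z|)^N=2^{-|l|N}$ on the support of the spatial cutoff, yielding the same three bounds with an extra geometrically decaying factor in $|l|$; the key uniform fact is that one $\xi$-derivative of $m(\xi)\wt\psi(2^{-j}\xi)$ on $\{|\xi|\sim 2^j\}$ costs $\lesssim 2^{j(\alpha-1)}$ for every $\alpha$ and every $j$ of the relevant sign.

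From here I would follow Section \ref{sec:proof} with no new ideas: interpolating the three building blocks and invoking Bernstein's inequality yields the optimal $L^r\to L^s$ improving bounds for $1\leq r\leq s\leq 2$ and for $1\leq r\leq 2\leq s\leq r'$ (the analogues of Lemmas \ref{qrlemma} and \ref{r2slemma}); the $3^n$-grid trick localises each $T^{j,l}$ to a dyadic cube of sidelength $\sim 2^l2^{j(\alpha-1)}$; and bounding $|\langle T^{j,l}f,g\rangle|$ on such a cube $Q$ by $|Q|^{1/r+1/s'-1}\|T^{j,l}\|_{L^r\to L^s}|Q|\langle f\rangle_{r,Q}\langle g\rangle_{s',Q}$ reduces matters to summing a series in $j$ (and $l$). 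Tracking exponents through the correspondence $(\rho,m)\leftrightarrow(1-\alpha,-\beta)$, the main part contributes $\sum_j 2^{j[n\alpha(1/r-1/2)-\beta]+j\epsilon(\cdots)}$ in the first regime, and the same with $n\alpha(1/r-1/s)-\beta$ in the second; since the sum is over $j\geq 0$ if $\alpha>0$ and over $j\leq 0$ if $\alpha<0$, it converges for small $\epsilon$ precisely when $\alpha\beta>0$ and $|\beta|>n|\alpha|(1/r-1/2)$, respectively $|\beta|>n|\alpha|(1/r-1/s)$. The tails carry extra geometric decay in $|l|$ and are summable under the same constraints. Finally the checkerboard/Carleson selection argument of Section \ref{sec:proof} upgrades the geometrically decaying family of sparse forms to a single genuine sparse form, and duality (the class \eqref{MikhlinCondition} being closed under conjugation) delivers the companion $(s',r)$ bounds.

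I expect the only real work beyond transcription to be the sign analysis: one must carefully record whether $j$ ranges over the nonnegative or the nonpositive integers according to the sign of $\alpha$, and check that the integration-by-parts and grid arguments are insensitive to the spatial scale $2^{j(\alpha-1)}$ being large --- as happens both for $\alpha<0$ (a low-frequency-dominated regime, where the kernels are very spread out) and for $\alpha>1$ (which lies outside the Hörmander range $0\leq\rho<1$; for instance $\alpha=2$ corresponds to the Schrödinger propagator). This is exactly the point where the hypothesis $\alpha\beta>0$ is forced, and it is the step I would carry out most carefully; everything else is a line-by-line translation of Section \ref{sec:proof}.
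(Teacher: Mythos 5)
Your proposal is correct and follows exactly the route the paper indicates: the paper's Section~\ref{sec:multipliers} proof is little more than a pointer back to Section~\ref{sec:proof}, noting that for $\alpha>0$ the argument is identical (with Plancherel replacing Calder\'on--Vaillancourt for the $L^2$ bound) and that for $\alpha<0$ one runs the decomposition over $j<0$ with the sign conditions reversed, which is precisely your correspondence $(\rho,m)\leftrightarrow(1-\alpha,-\beta)$ and your sign bookkeeping. You are also right that the translation-invariant setting eliminates the $\varepsilon$-loss in the $L^1\to L^1$ lemma: in Section~\ref{sec:proof} it enters only because the $L^\infty$ bound is transferred to $L^1$ through the adjoint of the \emph{class} rather than of the individual operator $T^j_a$, an issue that disappears once $T^j$ is a convolution operator and $\|K_j\|_{L^1}$ can be estimated directly.
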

The missing case $\alpha=0$, $\beta\neq 0$ in the above theorem corresponds to the classical case of fractional integrals, for which \textit{fractional} sparse bounds were obtained by Cruz-Uribe and Moen in \cite{CUM}, and the case $\alpha=\beta=0$ corresponds to the classical Mikhlin multipliers; see for instance \cite{BCA} for sparse bounds. To approach the $\alpha =0$ case with our methods, one needs to consider both low and high frequencies. It is clear that the resulting conditions on $\alpha$ and $\beta$ coming from each case would be excluding one another.
\newline
\indent
For the case $\alpha>0$, the proof is identical to that provided in Section \ref{sec:proof}. Indeed, as now we are in a translation-invariant setting, the $L^2$ bound is just the $L^\infty$ norm of the multiplier in contrast with the more involved $L^2$ theory for pseudodifferential operators.
\newline
\indent
For the case $\alpha <0$, one needs to perform a decomposition of the region $\{|\xi|\leq 1\}$ into dyadic annuli of width $2^j$ for $j<0$. As the sum runs over $j<0$, we require the reverse condition on the exponent of $2^j$ in order to sum the geometric series. Also, note that for $\alpha <0$ we require $\beta<0$, so for the summability on $l$ one requires
$$
\|T_a^{j,l}\|_{L^r \to L^s} \lesssim 2^{-10n(\beta-n)(j-l)}
$$
for all $l>|j|\epsilon$.
\newline
\indent
As it is well known, results for the above classes of multipliers imply as corollaries bounds on oscillatory kernels and the solution operator to certain dispersive equations. Of course, Theorem \ref{MultipliersSparse} also yields weighted corollaries for oscillatory Fourier multipliers in the very same fashion as is presented in Section \ref{sec:weighted}; the details are left to the interested reader.

\subsection{Oscillatory kernels}

For $a>0$, $a\neq 1$ and $b \geq n(1-\frac{a}{2})$, let $K_{a,b}:\R^n \to \C$ be given by
$$
K_{a,b}(x)=\frac{e^{i|x|^a}}{(1+|x|)^b}.
$$
These convolution kernels satisfy the sparse bounds in Theorem \ref{MultipliersSparse} with $\alpha=\frac{a}{a-1}$ and $\beta=\frac{na/2-n+b}{a-1}$. To see this, write $K_{a,b}=K_{a,b}^0 + K_{a,b}^\infty$, where $K_{a,b}^0=K_{a,b}\eta$ for some $\eta \in C^\infty_c$. As $|K_{a,b}^0| \lesssim \eta$, it satisfies all possible sparse bounds. For $K_{a,b}^{\infty}$, we use the well known-fact (see, for instance, \cite{Sjo81Lp, bigStein}) that its Fourier transform satisfies the condition \eqref{MikhlinCondition} for $\alpha=\frac{a}{a-1}$ and $\beta=\frac{na/2-n+b}{a-1}$. We further break the multiplier in two parts, $\widehat{K}^\infty_{a,b}=m_0+m_\infty$, so that $m_\infty$ guarantees the support condition $\{|\xi|^{\alpha} > 1\}$.  As $m_0$ is a rapidly decreasing function, so it is its associated kernel, and therefore also satisfies all possible sparse bounds. Finally, for $m_\infty$, apply Theorem \ref{MultipliersSparse}, which determines the sparse bounds for the original convolution kernel $K_{a,b}$.
\newline
\indent
We note that our results exclude the case $a=1$, as the associated Fourier multipliers are not \textit{oscillatory}. Indeed when $a=1$, $b=(n+1)/2 + \delta$, the kernels $K_{a,b}$ are the convolution kernels associated to the Bochner--Riesz multipliers $m_\delta$. Sparse bounds for Bochner--Riesz multipliers have been recently obtained by Lacey, Mena and Reguera in \cite{LMR}; see also the endpoint result of Kesler and Lacey \cite{KLbr} or the previous work of Benea, Bernicot and Luque \cite{BBL}. Besides the weighted $A_p$ estimates obtained as a consequence of the sparse domination, one should note that there are weighted Fefferman--Stein inequalities for Bochner--Riesz multipliers involving Kakeya-type maximal functions; see for instance the earlier work of Carbery \cite{CarWeight} or Carbery and Seeger \cite{CS}.

\subsection{Dispersive equations}

Given $\alpha \in \N$, the solution of the dispersive equation
\begin{equation*}\label{IVP}
\begin{cases}
i \partial_t u +(-\Delta)^{\alpha/2}u=0 \\
u(\cdot,0)=f
\end{cases}
\end{equation*}
is given by
$$
u(x,t)=e^{it(-\Delta)^{\alpha/2}}f(x):=\int_{\R^n} e^{i x\cdot \xi} e^{i t |\xi|^\alpha} \widehat{f}(\xi)d\xi.
$$
As the Fourier multiplier associated to the $x$-variable is $m(\xi)=e^{it|\xi|^\alpha}$, a rescaling argument provides sparse bounds for $u(x,t)$ for a fixed time $t$. For $\alpha \in \N$, and under the hypothesis of Theorem \ref{MultipliersSparse} one has that there exists a sparse family $\CS$ such that
$$
|\left<u(\cdot, t), g\right>|\le C(m,\rho, r, s) \sum_{Q \in \CS} |t^{1/\alpha}Q| \left< (I-t^{2/\alpha}\Delta)^{\beta/2}f \right>_{r,t^{1/\alpha}Q} \left< g \right>_{s,t^{1/\alpha}Q},
$$
where $t^{1/\alpha}Q$ denotes the concentric cube to $Q$ of volume $|t^{1/\alpha}Q|=t^{n/\alpha}|Q|$ and 
$$
(I-t^{2/\alpha} \Delta)^{\beta/2}f(x):=\int_{\R^n} e^{ix \cdot \xi} (1+t^{2/\alpha}|\xi|^2)^{\beta/2}\widehat{f}(\xi)d\xi
$$
defines a $t-$inhomogeneous Sobolev norm.

\subsection{A subdyadic Hörmander condition}

Theorem \ref{MultipliersSparse} also holds for broader classes of multipliers than those given by the Mikhlin-type condition \eqref{MikhlinCondition}. In particular, it is enough to ask that the multiplier $m$, with support in $\{\xi \in \R^n : |\xi|^\alpha \geq 1\}$, satisfies the Hörmander-type condition \footnote{Our results may also be seen to hold under a weaker Hörmander--Sobolev formulation, although we refrain from doing that for simplicity.}
\begin{equation}\label{HormSD}
\sup_{B}\; \dist(B,0)^{\beta+(1-\alpha)|\gamma|} \Big( \frac{1}{|B|}\int_B |D^\gamma m(\xi)|^2 d\xi \Big)^{1/2}< \infty
\end{equation}
for all $\gamma \in \N^n$. Here the supremum is taken over all euclidean balls $B$ in $\mathbb{R}^n$ with $\dist(B,0)^\alpha\geq 1$ such that $$r(B) \sim \dist(B,0)^{1-\alpha},$$ where $r(B)$ denotes the radius of $B$. The proof of Theorem \ref{MultipliersSparse} under these weaker assumptions is conceptually not harder, and only requires a reelaboration of the $L^r-L^s$ bounds for $T_m^j$ and the decay bounds for $T_m^{j,l}$. This may be done using similar ideas to those in the proof of Lemma \ref{iilemma}. The details are left to the interested reader.
\newline
\indent
The above \textit{subdyadic} Hörmander condition was introduced by Bennett and the first author in \cite{BB}, to which we refer for further discussion. Unlike in there, or in the classical Mikhlin--Hörmander theorem, our methods seem to require the conditions \eqref{MikhlinCondition} and \eqref{HormSD} to hold for all $\gamma \in \N^n$ and not only for those $|\gamma| \leq \lfloor \frac{n}{2} \rfloor +1$. It would be interesting to determine if Theorem \ref{MultipliersSparse} holds under these weaker assumptions in the number of derivatives.

\appendix

\section{A pointwise sparse bound for the classes $S^{-n(1-\rho)}_{\rho,\delta}$}
Given a sublinear operator $T$, let $\CM_T$ be its \textit{grand maximal function}, defined by
\begin{equation*}\label{GrandMaximalFunction}
\CM_T f(x):= \sup_{Q \ni x} \esssup_{z \in Q} |T(f\chi_{\R^n \backslash 3Q})(z)|;
\end{equation*}
here the supremum is taken over all cubes $Q \subset \R^n$ containing $x$. In \cite{LeNew}, Lerner proved the following abstract theorem for pointwise sparse domination, which relies on the boundedness of the operator $\CM_T$.
\begin{theorem}[\cite{LeNew}]\label{GeneralPointwise}
Assume that $T$ is a sublinear operator of weak type $(q,q)$ and $\CM_T$ is of weak type $(r,r)$, where $1 \leq q \leq r < \infty$. Then, for every compactly supported $f \in L^r(\R^n)$, there exists a sparse family $\CS$ such that for a.e. $x \in \R^n$,
$$
|Tf(x)| \leq C \CA_{r,\CS} f(x),
$$ 
where $C=C(n,q,r)(\|T\|_{L^q \to L^{q,\infty}}+ \|\CM_T\|_{L^r \to L^{r,\infty}})$.
\end{theorem}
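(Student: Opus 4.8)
The plan is to prove Theorem~\ref{GeneralPointwise} by the now-standard recursive (stopping-time) scheme, localising the whole problem to a single cube. Concretely, I would first establish the following local statement: for every cube $Q_0$ and every $f$, there is a $\tfrac12$-sparse family $\CF$ of dyadic subcubes of $Q_0$ such that for almost every $x\in Q_0$,
\[
|T(f\chi_{3Q_0})(x)|\le c_n\big(\|T\|_{L^q\to L^{q,\infty}}+\|\CM_T\|_{L^r\to L^{r,\infty}}\big)\sum_{Q\in\CF}\langle f\rangle_{r,3Q}\,\chi_Q(x).
\]
The passage from this to the theorem is routine: one applies it along an increasing sequence of cubes exhausting $\R^n$, each containing $\supp f$ (so that $f\chi_{3Q_0}=f$ on the relevant range), with a constant independent of the cube; by the three-lattices lemma one replaces each $3Q$ by a cube $Q'$ in one of the $3^n$ shifted dyadic grids $\mathcal{D}_{\vec v}$ with $3Q\subset Q'$ and $\ell(Q')\lesssim_n\ell(Q)$, so that $\langle f\rangle_{r,3Q}\chi_Q\lesssim_n\langle f\rangle_{r,Q'}\chi_{Q'}$ and the right-hand side becomes a genuine $\CA_{r,\CS}$ (the map $Q\mapsto Q'$ turning a sparse family into a finite union of sparse families, which is sparse); finally one extracts a single limiting sparse family over the exhaustion by a diagonal argument, the Carleson, equivalently sparseness, condition being stable under this limit.

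For the local statement I would carry out one recursive step and iterate. Write $A:=\langle f\rangle_{r,3Q_0}$ and consider the exceptional set
\[
E:=\{x\in Q_0:\ |T(f\chi_{3Q_0})(x)|>\lambda_1 A\}\cup\{x\in Q_0:\ \CM_T(f\chi_{3Q_0})(x)>\lambda_2 A\}.
\]
Since $q\le r$ one has $\|f\chi_{3Q_0}\|_{L^q}\le|3Q_0|^{1/q}A$ and $\|f\chi_{3Q_0}\|_{L^r}=|3Q_0|^{1/r}A$; inserting these into the weak $(q,q)$ bound for $T$ and the weak $(r,r)$ bound for $\CM_T$, I would choose $\lambda_1\sim\|T\|_{L^q\to L^{q,\infty}}$ and $\lambda_2\sim\|\CM_T\|_{L^r\to L^{r,\infty}}$ (with the exponent dependence absorbed in dimensional constants) so that $|E|\le 2^{-n-2}|Q_0|$. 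A Calderón--Zygmund stopping-time selection inside $Q_0$ then produces pairwise disjoint dyadic cubes $P_j\subset Q_0$ with $|P_j\cap E|\le\tfrac12|P_j|$, $|E\setminus\bigcup_j P_j|=0$, and $\sum_j|P_j|\le\tfrac12|Q_0|$.

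Almost every $x\in Q_0\setminus\bigcup_j P_j$ lies outside $E$, so $|T(f\chi_{3Q_0})(x)|\le\lambda_1 A$ there. On each $P_j$, pick $\xi_j\in P_j\setminus E$ (possible since $|P_j\cap E|<|P_j|$); writing $f\chi_{3Q_0}=f\chi_{3Q_0\setminus 3P_j}+f\chi_{3Q_0\cap 3P_j}$, using sublinearity, and noting that the very definition of the grand maximal function (evaluated at $\xi_j$, with the competing cube taken to be $P_j$) yields $\esssup_{z\in P_j}|T(f\chi_{3Q_0\setminus 3P_j})(z)|\le\CM_T(f\chi_{3Q_0})(\xi_j)\le\lambda_2 A$, one obtains for a.e. $x\in Q_0$
\[
|T(f\chi_{3Q_0})(x)|\le (\lambda_1+\lambda_2)A\,\chi_{Q_0}(x)+\sum_j\big|T(f\chi_{3Q_0}\chi_{3P_j})(x)\big|\,\chi_{P_j}(x).
\]
Iterating this single step on each $P_j$ — applying the construction with $Q_0$ replaced by $P_j$ and $f$ by $f\chi_{3Q_0}$, and bounding all resulting averages by $\langle f\rangle_{r,3Q}$ — and collecting the selected cubes into $\CF$ proves the local statement: the packing bound $\sum_j|P_j|\le\tfrac12|Q_0|$ at every generation is exactly what makes $\CF$ a $\tfrac12$-Carleson, hence sparse, family, while telescoping the displayed inequality across generations yields the pointwise bound with constant $\sim c_n(\|T\|_{L^q\to L^{q,\infty}}+\|\CM_T\|_{L^r\to L^{r,\infty}})$.

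The main obstacle — really the only delicate point — is the calibration of the thresholds $\lambda_1,\lambda_2$: they must be chosen large enough, in terms only of $n$ and the two quasinorms, to force $|E|$ small enough that both the packing estimate $\sum_j|P_j|\le\tfrac12|Q_0|$ and the existence of a good point $\xi_j\in P_j\setminus E$ hold simultaneously, while keeping the final constant of the asserted shape. Once this balancing is fixed, the iteration, the verification of sparseness, and the globalisation (the three-lattices trick together with the exhaustion/diagonal argument) are entirely standard.
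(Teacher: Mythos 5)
This theorem is quoted from Lerner \cite{LeNew} and used as a black box in the appendix; the paper contains no proof of it, so there is nothing internal to compare against. Your reconstruction is correct and follows the scheme of \cite{LeNew} in every essential: the calibration of $\lambda_1,\lambda_2$ against the two weak-type norms via H\"older ($q\le r$) to force $|E|\le 2^{-n-2}|Q_0|$; the Calder\'on--Zygmund selection at density threshold $2^{-n-1}$ giving $|P_j\cap E|\le\tfrac12|P_j|$, $\sum_j|P_j|\le\tfrac12|Q_0|$, and $|E\setminus\bigcup_j P_j|=0$; the choice of a good point $\xi_j\in P_j\setminus E$ together with the competing cube $P_j$ in the definition of $\CM_T$ to control $T(f\chi_{3Q_0\setminus 3P_j})$ on $P_j$; and the recursion with packing constant $\tfrac12$ yielding a $\tfrac12$-sparse family.

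Two small points are worth making explicit. First, the telescoping across generations terminates a.e.\ because the total measure of the generation-$N$ cubes is at most $2^{-N}|Q_0|$, so for a.e.\ $x$ the remainder $\sum_{P\text{ gen }N}|T(f\chi_{3P})(x)|\chi_P(x)$ is identically zero for all large $N$; without this observation the recursive inequality alone does not yet give the infinite-sum bound. Second, for the globalisation your exhausting-sequence-plus-diagonal extraction does work, but it needs a little care since the local sparse families $\CF_k$ attached to the increasing cubes are not nested; one must pass to a subsequence along which the cubes of each fixed dyadic scale stabilise, and then check that the Carleson condition (equivalently, sparseness) survives the limit. This is the same device \cite{LeNew} uses, so your argument and the cited source coincide modulo presentation.
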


As is well known that $T_a$ is of weak type $(1,1)$ for $a \in S^{-n(1-\rho)}_{\rho,\delta}$, the above Theorem of Lerner reduces the proof of Theorem \ref{pointwise} to verifying that $\CM_{T_a}$ is of weak type $(r,r)$ for any $r>1$. The proof of this fact is quite standard, and is based on work of Chanillo and Torchinksy \cite{CT}, and Michalowski, Rule and Staubach \cite{MRScan}.

\begin{proof}[Proof of Theorem \ref{pointwise}]

Given a point $x$ and a cube $Q \ni x$, we distinguish two cases, $|Q|\leq 1$ and $|Q|>1$. The latter case is easy to deal with, and using decay estimates of the associated kernel to $T_a$, it is elementary to see that
\begin{equation}\label{Q less 1}
|T_a(f \chi_{\R^n \backslash 3Q})(z)|  \lesssim Mf(x)
\end{equation}
for any $z \in Q$; see, for instance \cite{MRScan}.
\newline
\indent
If $|Q|\leq 1$, a slightly more refined analysis is required. For any $z, x' \in Q$, 
\begin{align}
|T_a(f \chi_{\R^n \backslash 3Q})(z)| & \leq |T_a(f \chi_{\R^n \backslash 3Q})(z)-T_a(f \chi_{\R^n \backslash 3Q})(x')| \notag \\ 
& \;\;\;\; + |T_a(f )(x')| + |T_a (f \chi_{3Q})(x')|. \label{break pointwise}
\end{align}
In \cite{MRScan} (see also \cite{CT}), it is proven that 
$$
|T_a(f \chi_{\R^n \backslash 3Q})(z)-T_a(f \chi_{\R^n \backslash 3Q})(x')| \lesssim M_pf(x)
$$
for any $p:=p(\rho)>1$ sufficiently close to 1 and $Q \ni x$ with $|Q|\leq 1$. Raising the estimate \eqref{break pointwise} to a power $1<s<r$, integrating with respect to $x' \in Q$, and raising it again to the power $1/s$,
\begin{align}
|T_a(f \chi_{\R \backslash 3Q})(z)| \! & \lesssim \! M_pf(x) \! + \! \Big(\frac{1}{|Q|} \! \int_Q \! |T_a f(x')|^s dx' \!\Big)^{1/s} \! \!\!\! \notag + \! \Big(\frac{1}{|Q|}\!  \int_Q \!\! |T_a (f \chi_{3Q})(x')|^s dx' \!\Big)^{1/s} \notag \\
& \lesssim \! M_pf(x) \! + \! M_s(T_a f)(x) + \|T_a\|_{s} \Big(\frac{1}{|Q|}\int_{3Q} | f(x')|^s dx' \Big)^{1/s} \notag \\
& \lesssim \! M_pf(x) \! + \! M_s(T_a f)(x) + \|T_a\|_{s}M_sf (x), \label{Q more than 1}
\end{align} 
after taking supremum over all $Q \ni x$ and using the boundedness of $T_a$ on $L^s$. Combining \eqref{Q less 1} and \eqref{Q more than 1}
$$
\CM_{T_a} f(x) \lesssim Mf(x) + M_pf(x) + M_s(T_a f)(x) + \|T_a\|_{s} M_s f(x).
$$
Taking $L^r$-norms, with $r>\max(p,s)$,
$$
\|\CM_{T_a} f\|_r \lesssim \big( \|M\|_r + \|M\|_{r/p} +\|M\|_{r/s}\|T_a\|_r + \|T_a\|_s \|M\|_{r/s} \big) \|f\|_r.
$$
As $p$ and $s$ may be chosen arbitrarily close to $1$, $\CM_{T_a}$ is bounded on $L^r$ for any $r>1$. Thus, an application of Theorem \ref{GeneralPointwise} yields
\begin{equation*}
|T_a f(x)| \lesssim A_{r,\CS} f(x),
\end{equation*}
as required.
\end{proof}

\bibliographystyle{abbrv} 

\bibliography{SparsePseudosRevised}

\end{document}